\documentclass[12pt,reqno]{amsart}
\usepackage{amsthm}
\usepackage{amssymb}
\usepackage{graphics}
\usepackage{tikz}
\usetikzlibrary{shapes,backgrounds,calc}
\usepackage{latexsym}
\usepackage{multicol}
\usepackage{verbatim,enumerate}
\usepackage{accents}
\usepackage{cite}
\usepackage{multirow}
\usepackage{bigstrut}
\usepackage{amsthm}
\usepackage{amssymb}
\usepackage{graphics}
\usepackage{tikz}
\usetikzlibrary{shapes,backgrounds,calc}
\usepackage{latexsym}
\usepackage{multicol}
\usepackage{verbatim,enumerate}
\usepackage{accents}
\usepackage{cite}
\usepackage{array}
\usepackage[colorlinks=true, linkcolor=blue, citecolor=blue, urlcolor=black]{hyperref}
\usepackage{hyperref}
\usepackage{amsmath, amscd,url}
\usepackage{multirow}
\usepackage{bigstrut}
\usepackage{longtable}

\usepackage{stackengine}


\newcommand{\Q}{\mathbb{Q}}
\newcommand{\Z}{\mathbb{Z}}
\newcommand{\N}{\mathbb{N}}

\usepackage{hyperref}
\usepackage{amsmath, amscd,url}
\usepackage{longtable}

\advance\textwidth by 1.3in \advance\oddsidemargin by -.6in \advance\evensidemargin by -.6in
\parskip=2mm





\theoremstyle{definition}

\newtheorem{theorem}{Theorem}[section]
\newtheorem{lemma}[theorem]{Lemma}
\newtheorem{corollary}[theorem]{Corollary}
\theoremstyle{definition}

\newtheorem{proposition}[theorem]{Proposition}

\theoremstyle{remark}
\newtheorem{remark}[theorem]{Remark}

\theoremstyle{definition}

\newcounter{cnt}
 \makeatletter
\def\mydggeometry{\makeatletter\dg@YGRID=1\dg@XGRID=20\unitlength=0.003pt\makeatother}
\makeatother \theoremstyle{remark}


\numberwithin{equation}{section}
\let\bwdg\bigwedge
\def\bigwedge{{\textstyle\bwdg}}

\newcommand{\nc}{\newcommand}
\newcommand{\rnc}{\renewcommand}

\nc{\cal}{\mathcal} \nc{\goth}{\mathfrak} \rnc{\bold}{\mathbf}

\nc\bomega{{\mbox{\boldmath $\omega$}}} \nc\bpsi{{\mbox{\boldmath $\Psi$}}}
 \nc\balpha{{\mbox{\boldmath $\alpha$}}}
 \nc\bpi{{\mbox{\boldmath $\pi$}}}
 \nc\bvpi{{\mbox{\boldmath $\varpi$}}}
\nc\chara{\operatorname{ch}}

  \nc\bxi{{\mbox{\boldmath $\xi$}}}
\nc\bmu{{\mbox{\boldmath $\mu$}}} \nc\bcN{{\mbox{\boldmath $\cal{N}$}}} \nc\bcm{{\mbox{\boldmath $\cal{M}$}}} \nc\blambda{{\mbox{\boldmath
$\lambda$}}}\nc\bnu{{\mbox{\boldmath $\nu$}}}

\makeatletter
\def\section{\def\@secnumfont{\mdseries}\@startsection{section}{1}%
  \z@{.7\linespacing\@plus\linespacing}{.5\linespacing}%
  {\normalfont\scshape\centering}}
\def\subsection{\def\@secnumfont{\bfseries}\@startsection{subsection}{2}%
  {\parindent}{.5\linespacing\@plus.7\linespacing}{-.5em}%
  {\normalfont\bfseries}}
\makeatother

 \nc{\Hom}{\operatorname{Hom}}
  \nc{\mode}{\operatorname{mod}}
\nc{\End}{\operatorname{End}} \nc{\wh}[1]{\widehat{#1}} \nc{\Ext}{\operatorname{Ext}} \nc{\ch}{\text{ch}} \nc{\ev}{\operatorname{ev}}
\nc{\Ob}{\operatorname{Ob}} \nc{\soc}{\operatorname{soc}} \nc{\rad}{\operatorname{rad}} \nc{\head}{\operatorname{head}}

 \nc{\Cal}{\cal} \nc{\Xp}[1]{X^+(#1)} \nc{\Xm}[1]{X^-(#1)}

\nc\boa{\bold a} \nc\bob{\bold b} \nc\boc{\bold c} \nc\bod{\bold d} \nc\boe{\bold e} \nc\bof{\bold f} \nc\bog{\bold g}
\nc\boh{\bold h} \nc\boi{\bold i} \nc\boj{\bold j} \nc\bok{\bold k} \nc\bol{\bold l} \nc\bom{\bold m} \nc\bon{\mathbb n} \nc\boo{\bold o}
\nc\bop{\bold p} \nc\boq{\bold q} \nc\bor{\bold r} \nc\bos{\bold s} \nc\boT{\bold t} \nc\boF{\bold F} \nc\bou{\bold u} \nc\bov{\bold v}
\nc\bow{\bold w} \nc\boz{\bold z}\nc\ba{\bold A} \nc\bb{\bold B} \nc\bc{\mathbb C} \nc\bd{\bold D} \nc\be{\bold E} \nc\bg{\bold
G} \nc\bh{\bold H} \nc\bi{\bold I} \nc\bj{\bold J} \nc\bk{\bold K} \nc\bl{\bold L} \nc\bm{\bold M} \nc\bn{\mathbb N} \nc\bo{\bold O} \nc\bp{\bold
P} \nc\bq{\bold Q} \nc\br{\bold R} \nc\bs{\bold S} \nc\bt{\bold T} \nc\bu{\bold U} \nc\bv{\bold V} \nc\bw{\bold W} \nc\bz{\mathbb Z} \nc\bx{\bold
x} \nc\KR{\bold{KR}} \nc\rk{\bold{rk}} \nc\het{\text{ht }}

\nc\toa{\tilde a} \nc\tob{\tilde b} \nc\toc{\tilde c} \nc\tod{\tilde d} \nc\toe{\tilde e} \nc\tof{\tilde f} \nc\tog{\tilde g} \nc\toh{\tilde h}
\nc\toi{\tilde i} \nc\toj{\tilde j} \nc\tok{\tilde k} \nc\tol{\tilde l} \nc\tom{\tilde m} \nc\ton{\tilde n} \nc\too{\tilde o} \nc\toq{\tilde q}
\nc\tor{\tilde r} \nc\tos{\tilde s} \nc\toT{\tilde t} \nc\tou{\tilde u} \nc\tov{\tilde v} \nc\tow{\tilde w} \nc\toz{\tilde z} \nc\woi{w_{\omega_i}}

\begin{document}
\setcounter{section}{0}
\setcounter{tocdepth}{1}


\title{On the index of power compositional polynomials }

\author[Sumandeep Kaur]{Sumandeep Kaur}
\author[Surender Kumar]{Surender Kumar}
\author[L\'aszl\'o Remete]{L\'aszl\'o Remete}
\address[Sumandeep Kaur]{Department of Mathematics, Panjab University, Chandigarh}
\address[Surender Kumar]{Department of Mathematics, Indian Institute of Technology (IIT), Bhilai}
\address[L\'aszl\'o Remete\footnote{Corresponding author}]{Institute of Mathematics, University of Debrecen, Hungary and ELKH-DE Equations, Functions, Curves and their Applications
Research Group}
\email[Sumandeep Kaur]{sumandhunay@gmail.com}
\email[Surender Kumar]{surenderk@iitbhilai.ac.in}
\email[L\'aszl\'o Remete]{remete.laszlo@science.unideb.hu}

\thanks{ 
	 The second author is grateful to the University Grants Commision, New Delhi for providing financial support in the form of Junior Research Fellowship through Ref No.1129/(CSIR-NET JUNE 2019). The research of the third named author was supported in part by the E\"otv\"os Lor\'and Research Network (ELKH), the Univerity of Debrecen Scientific Research Bridging Fund (DETKA) and by the 2024-2.1.1-EKÖP program from the source of the National Research, Development and Innovation fund.}

\subjclass [2010]{11R04, 11R09.}
\keywords{discriminant, monogenity of polynomials, power compositional polynomials.}

\begin{abstract}
The index of a monic irreducible polynomial $f(x)\in \Z[x]$ having a root $\theta$ is the index $[\Z_K:\Z[\theta]]$ where $\Z_K$ is the ring of algebraic integers of the number field $K=\Q(\theta)$.  If $[\Z_K:\Z[\theta]]=1$, then $f(x)$ is monogenic. In this paper, we give necessary and sufficient conditions for a monic irreducible power compositional polynomial $f(x^k)$ belonging to $\Z[x]$, to be monogenic. As an application of our results, for a polynomial $f(x)=x^d+A\cdot h(x)\in\Z[x],$ with $d>1, \deg h(x)<d$ and $|h(0)|=1$, we prove that for each positive integer $k$ with $\rad(k)\mid \rad(A)$, the power compositional polynomial $f(x^k)$ is monogenic if and only if $f(x)$ is monogenic, provided that $f(x^k)$ is irreducible. At the end of the paper, we give infinite families of polynomials as examples.

\end{abstract}
\maketitle
\section{Introduction and statements of the results}
 Let $f(x)\in\Z[x]$ be a monic irreducible polynomial with a root $\theta$, and set $K=\Q(\theta)$. If $\Z_K$ denotes the ring of algebraic integers of $K$, then the index of $f(x)$  is the index $[\Z_K:\Z[\theta]]$. It is well known that the index of $f(x)$ is always finite. We say that $f(x)$ is monogenic, if the index of $f(x)$ is equal to $1$. Dedekind  gave a simple criterion (cf.  \cite[Theorem 6.1.4]{HC}, \cite{RD}) stated below to verify when a prime $p$ does not divide $[\Z_K : \mathbb Z[\theta]]$.
 
 \noindent\textbf{Dedekind Criterion} { Let $K=\mathbb{Q}(\theta)$ be an
algebraic number field with $f(x)$ as the minimal polynomial of
the algebraic integer $\theta$ over $\mathbb{Q}.$ Let $p$ be a prime and let
$\overline{f}(x) = \overline{g}_{1}(x)^{e_{1}}\ldots \overline{g}_{t}(x)^{e_{t}}$ be
the factorization of $\overline{f}(x)$ as a product of powers of
distinct irreducible polynomials over $\mathbb{Z}/p\mathbb{Z},$
with each $g_{i}(x)\in \mathbb{Z}[x]$ monic. Let $M(x)$ denote the polynomial
$\frac{1}{p}(f(x)-g_{1}(x)^{e_{1}}\ldots g_{t}(x)^{e_{t}})$ with
coefficients from $\mathbb{Z}.$ Then $p$ does not divide
$[\Z_{K}:\mathbb{Z}[\theta]]$ if and only if for each $i,$ we have
either $e_{i}=1$ or $\bar{g}_{i}(x)$ is coprime to
$\overline{M}(x).$}

Using the above criterion, many authors provided necessary and sufficient conditions for the monogenity of different types of polynomials like binomial, trinomials and certain quadrinomials (cf. \cite{df}, \cite{jks}, \cite{jkss}, \cite{j-kk}, \cite{ljj1}). 
The coprimality condition given in Dedekind criterion becomes very difficult to handle in the case of higher degree polynomials. In the present paper, for the polynomials of the form $f(x^k)$, using monogenity of $f(x)$, squarefreeness of $f(0)$, we show that the coprimality condition is dependent only on the polynomial $f(x^p)$, where $p\mid k$ not on $f(x^k)$. Basically, our results can easily generates new infinite monogenic polynomials of higher degree (see Section 4). Recently, L. Jones \cite{ljj1} constructed infinite collections of monic Eisenstein polynomials $f(x)\in\Z[x]$ such that the power compositional polynomials $f(x^{d^n})$ are monogenic for all integers $n\geq 0$ and any integer $d > 1$, where $f(x)$ is Eisenstein with respect to every prime divisor of $d$.    

Our main purpose in this paper is to give simpler conditions for the monogenity of an irrreducible power compositional polynomial $f(x^k)$.
This inquiry leads us to Theorem \ref{sufnec} that provides a characterization of such polynomials.  Theorem \ref{sufnec} ensure that $f(x^k)$ is monogenic if and only if three specific criteria are met. These criteria involve the monogenity of $f(x)$, divisibility conditions related to primes dividing $k$, and the squarefreeness of $f(0)$. Our results extend beyond the scope of Dedekind Criterion where the entire polynomial $f(x^k)$ is to be factored. 

 
 \indent Let $k$ be a positive integer and $q$ be a prime divisor of $k$. Suppose $f(x)\in\Z[x]$ is a monic polynomial. If $f(x^k)$ is irreducible, then the roots $\alpha$ and $\beta$ of $f(x^q)$ and $f(x^k)$ respectively, give rise to a tower of fields as shown below.
\begin{figure}[h!]
 	\centering
 		\begin{tikzpicture}[scale=0.4]
 			\draw[thick, ->] (0, 0) -- (0, 1);
 			\draw[thick, ->] (0, 3) -- (0, 4);
 			\draw (0,-1) node{$\Q$};
    \draw (0,2) node{$\Q(\alpha)$};
    \draw (0,5) node{$\Q(\beta)$};
 		\end{tikzpicture}
 		\label{figure3}
 	\end{figure}\\
\indent Note that $\beta^{\frac{k}{q}}=\alpha$. In this paper, we discuss the relation between the indices $[\Z_L:\Z[\beta]]$ and $[\Z_K:\Z[\alpha]]$, where $L=\Q(\beta)$ and $K=\Q(\alpha)$, i.e., the relation between the index of $f(x^q)$ and the index of $f(x^k),$ where $q\mid k$. Moreover, we give necessary and sufficient conditions for these indices to be equal to $1$. \\
\indent  It is well known, that if $f(x)$ is the minimal polynomial of an algebraic integer $\theta$ over $\Q$, then  the discriminant $ D(f)$ of $f(x)$ and the discriminant $\Delta(K)$ of $K=\Q(\theta)$ are related by the formula 
\begin{equation}\label{eq}
D(f)=[\Z_K:\Z[\theta]]^2\cdot \Delta(K).
\end{equation} 
Clearly, if $D(f)$ is squarefree, then $f(x)$ is monogenic. But the converse is not always true.  Several classes of monogenic polynomials with non squarefree discriminants are known and their properties have been studied (cf.  \cite{Gaal} - \cite{moo},
\cite{LJ} - \cite{susu}, \cite {HS1}).  
L. Jones in \cite{ljj} studied the monogenity of power compositional Shanks polynomials. 

\indent  Throughout the paper, the irreducibility of a monic polynomial belonging to $\Z[x]$ is meant over $\Q$, unless stated otherwise. Note, that for a positive integer $k$, the irreducibility of a  monic polynomial $f(x^k)$ implies the irreduciblity of $f(x)$. For a positive integer $z,$ the radical of $z$, denoted by $\rad(z)$, is defined  as the largest squarefree integer dividing $z$.\\
\indent Next, we state our main theorems.
\begin{theorem}\label{sufnec}
    Let $f(x)$ be a monic polynomial with integer coefficients. Let $k\geq 2$ be an integer such that $f(x^k)$ is irreducible. Then $f(x^k)$ is monogenic if and only if 
    \begin{enumerate}
        \item $f(x)$ is monogenic,
		\item $p$ does not divide the index of $f(x^p)$ for all primes $p\mid k$ and
		\item $f(0)$ is squarefree.
    \end{enumerate} 
\end{theorem}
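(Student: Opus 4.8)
The plan is to test Dedekind's criterion one prime at a time, exploiting the identity $\overline{h(x^{p})}=\overline{h}(x)^{p}$ in $\mathbb{F}_{p}[x]$. Set $n=\deg f$. Two preliminary facts will serve as tools. First, from $D(x^{k}-a)=\pm k^{k}a^{k-1}$, $\operatorname{Res}(x^{k}-a,x^{k}-b)=(a-b)^{k}$ and $f(x^{k})=\prod_{i}(x^{k}-\theta_{i})$ one obtains the discriminant formula
\[
D\bigl(f(x^{k})\bigr)=\pm\,k^{nk}\,f(0)^{\,k-1}\,D(f)^{k},
\]
so $\ind(f(x^{k}))$ is divisible only by primes dividing $k\,f(0)\,D(f)$. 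Second, for $m\mid k$, if $\beta$ is a root of $f(x^{k})$ and $\alpha=\beta^{k/m}$, set $K=\Q(\alpha)$ and $L=\Q(\beta)$; then $\alpha$ is a root of $f(x^{m})$ (which is irreducible, since a nontrivial factorization of $f(x^{m})$ would produce one of $f(x^{k})$), $\beta$ has minimal polynomial $x^{k/m}-\alpha$ over $K$, and $1,\beta,\dots,\beta^{k/m-1}$ is simultaneously a $\Z[\alpha]$-basis of $\Z[\beta]$ and a $\Z_{K}$-basis of $\Z_{K}[\beta]$. Comparing the lattices $\Z[\beta]\subseteq\Z_{K}[\beta]\subseteq\Z_{L}$ yields the tower identity
\[
\ind\bigl(f(x^{k})\bigr)=\bigl[\Z_{L}:\Z_{K}[\beta]\bigr]\cdot\ind\bigl(f(x^{m})\bigr)^{\,k/m},
\]
in particular $\ind(f(x^{m}))\mid\ind(f(x^{k}))$.

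With these tools the necessity of the three conditions is short. If $f(x^{k})$ is monogenic then $\ind(f(x^{k}))=1$, and the tower identity with $m=1$ and with $m=p$ gives $\ind(f)=1$ and $\ind(f(x^{p}))=1$ for every prime $p\mid k$, i.e.\ conditions $(1)$ and $(2)$. For $(3)$, suppose $\ell^{2}\mid f(0)$ for some prime $\ell$ and write $\bar f(x)=x^{s}\bar u(x)$ with $s\ge 1$ and $\bar u(0)\ne 0$; then $x$ occurs in $\bar f(x^{k})$ with multiplicity $sk\ge 2$, one may pick the lift in Dedekind's criterion to have constant term $0$, and the associated polynomial $M$ then satisfies $M(0)=f(0)/\ell\equiv 0\pmod{\ell}$, so $x\mid\bar M$ and $\ell\mid\ind(f(x^{k}))$, a contradiction.

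The heart of the proof is a per-prime description: for every prime $\ell$, (a) if $\ell\nmid k$ then $\ell\nmid\ind(f(x^{k}))$ iff $\ell\nmid\ind(f)$ and $\ell^{2}\nmid f(0)$; and (b) if $p\mid k$, writing $k=p^{a}m$ with $p\nmid m$, then $p\nmid\ind(f(x^{k}))$ iff $p\nmid\ind(f(x^{p}))$ and $p^{2}\nmid f(0)$. For (a), write $\bar f=x^{s}\prod_{i}\bar g_{i}^{\,e_{i}}$ over $\mathbb{F}_{\ell}$; since $\ell\nmid k$ and the $\bar g_{i}$ are separable, each $\bar g_{i}(x^{k})$ is squarefree and the $\bar g_{i}(x^{k})$ are pairwise coprime and coprime to $x$, so the repeated irreducible factors of $\bar f(x^{k})$ are $x$ (with multiplicity $sk$, when $s\ge 1$) and the irreducible factors of those $\bar g_{i}(x^{k})$ with $e_{i}\ge 2$. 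Taking $g_{i}(x^{k})$ as a lift of $\bar g_{i}(x^{k})$, the Dedekind polynomial attached to $f(x^{k})$ is exactly $\overline{M_{f}}(x^{k})$, where $M_{f}$ is the Dedekind polynomial of $f$; comparing with the test for $f$ shows the only extra requirement is $x\nmid\overline{M_{f}}$ in the case $s=1$, and this is precisely $\ell^{2}\nmid f(0)$. For (b), applying (a) to $g:=f(x^{p^{a}})$ with exponent $m$ (coprime to $p$) reduces matters to $f(x^{p^{a}})$, and then one peels off the factors of $p$ in the exponent one at a time via $f(x^{p^{j+1}})=\bigl(f(x^{p^{j}})\bigr)(x^{p})$. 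Here $\overline{h(x^{p})}=\bar h(x)^{p}$ forces every irreducible factor of $\bar f(x^{p^{j}})$ to have multiplicity divisible by $p^{j}$, so for $j\ge 1$ no multiplicity-one factor survives modulo $p$; tracking the Dedekind polynomial of $\bigl(f(x^{p^{j}})\bigr)(x^{p})$, one checks that the Frobenius correction term vanishes along every irreducible factor of multiplicity $\ge 2$, so the only new condition produced at each peeling is again $p^{2}\nmid f(0)$, leaving just $p\nmid\ind(f(x^{p}))$.

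Finally, monogenity of $f(x^{k})$ is the conjunction over all primes $\ell$ of $\ell\nmid\ind(f(x^{k}))$. By (a) its part over primes $\ell\nmid k$ reads ``$\ell\nmid\ind(f)$ and $\ell^{2}\nmid f(0)$'', and by (b) its part over primes $p\mid k$ reads ``$p\nmid\ind(f(x^{p}))$ and $p^{2}\nmid f(0)$''. Since $\ind(f)\mid\ind(f(x^{p}))$, the statement ``$p\nmid\ind(f(x^{p}))$ for all $p\mid k$'' already implies ``$p\nmid\ind(f)$ for all $p\mid k$'', so these two collections of conditions together are equivalent to $(1)$, $(2)$, and $f(0)$ being squarefree, which is $(3)$; this is the asserted equivalence. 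The main obstacle is step (b): controlling the Dedekind polynomial of a power-compositional polynomial at a prime dividing the exponent, where every residue factor collapses to a $p$-th power; squarefreeness of $f(0)$ is exactly what keeps the factor $x$ from contributing, and the Frobenius identity is what confines all remaining new coprimality conditions to the single step $f\mapsto f(x^{p})$ recorded by condition $(2)$.
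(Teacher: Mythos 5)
Your proposal is correct and reaches the theorem by a genuinely different route from the paper. Where the paper works ideal-theoretically with Uchida's criterion --- the identity $\langle p,g(x)\rangle^2=\langle p^2,g(x)\rangle\cap\langle p,g^2(x)\rangle$ of Lemma \ref{maxint}, the transfer $f(x)\in\langle p,g(x)\rangle^2\iff f(x^p)\in\langle p,g(x)\rangle^2$ of Theorem \ref{T1} with Corollary \ref{C1}, and Proposition \ref{C2} --- you argue entirely inside the classical Dedekind criterion, tracking the polynomial $M$ under the substitutions $x\mapsto x^k$ when $\ell\nmid k$ (your part (a), where $M_{f(x^k)}=M_f(x^k)$ and coprimality to $\bar g_i$ is preserved because a B\'ezout relation for $\bar g_i,\bar M_f$ substitutes into one for $\bar g_i(x^k),\bar M_f(x^k)$) and $x\mapsto x^p$ in the peeling step (your part (b), the analogue of Theorem \ref{T1}); for the necessity direction you replace Lemma \ref{L1} and Proposition \ref{re} by the lattice identity $\ind(f(x^k))=[\Z_L:\Z_K[\beta]]\cdot\ind(f(x^m))^{k/m}$, which is a pleasant by-product since it gives divisibility of the indices themselves, not merely of their prime divisors. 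Two verifications are left implicit and should be recorded to make this complete: first, that Dedekind's criterion is insensitive to the choice of monic lifts, so that you may lift the whole block $\bar g_i(x^k)$ (a product of distinct irreducibles sharing the exponent $e_i$) by $g_i(x^k)$ instead of lifting each irreducible factor separately --- the resulting change in $\bar M$ is divisible by every repeated irreducible factor of $\bar f(x^k)$, so every test is unchanged; second, the computation behind ``the Frobenius correction term vanishes'': writing $g_i(x^p)=g_i(x)^p+p\,\delta_i(x)$ and expanding binomially, for $j\geq 1$ one gets $\prod_i g_i(x^p)^{e_ip^j}\equiv\prod_i g_i(x)^{e_ip^{j+1}}\pmod{p^2}$ because the exponents are divisible by $p$, hence $\bar M_{j+1}=\bar M_j(x)^p$ and the per-factor tests at consecutive levels coincide (so in fact no new condition arises at the peeling steps $j\geq 1$; the $x$-factor condition $p^2\nmid f(0)$ is already present at the level of $f(x^p)$, making the clause ``$p^2\nmid f(0)$'' in your (b) redundant but harmless). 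With these routine details written out your argument is complete; what the paper's formulation buys is that the bookkeeping about lifts and block factors disappears into the ideal calculus, while your version stays elementary at the level of the Dedekind $M$-polynomial and yields the tower index formula as extra information.
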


\begin{remark}\label{TH1}
	Note, that for a monic polynomial $f(x)\in\Z[x]$, if the irreducible polynomial  $f(x^k)$ is monogenic, then by using Lemma \ref{L1} it is easy to conclude that $f(x^t)$ is also monogenic for each divisor $t$ of $k$.  
\end{remark}

\begin{remark}
    It is important to mention that the above theorem shows that the monogenity of $f(x^k)$ does not depend on the exponents of the primes in the factorization of $k$. This is a big difference compared to the Dedekind criterion, where the entire polynomial $f(x^k)$ is to be factored, not just $f(x^p)$. Furthermore, if $p$ does not divide the index of $f(x^p)$, then together with the points $(1)$ and $(3)$, this immediately implies the monogenity of the infinite family of polynomials $f(x^{p^u})$, $(u\in\N)$.
\end{remark}

In the proof of Theorem \ref{sufnec} we will see that if $f(x)$ is monogenic, then only the prime divisors of $k$ and $f(0)$ can divide the index of $f(x^k)$, so we obtain the following corollary to be used in the proof of Theorem \ref{abc}. 

\begin{corollary}\label{TH2}
	Let $f(x)$ be a monic polynomial having integer coefficients. Let $k$ be a positive integer such that $f(x^k)$ is irreducible. Suppose, that
	\begin{enumerate}
		\item $f(x)$ is monogenic and
		\item $p$ does not divide the index of $f(x^p)$ for any prime $p\mid k$.
	\end{enumerate} Then every prime divisor of the index of $f(x^k)$ will divide $f(0)$.
\end{corollary}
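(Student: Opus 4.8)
The plan is to prove directly that a rational prime $\ell$ dividing the index of $f(x^{k})$ must divide $f(0)$, handling separately the primes that ``come from'' $D(f)$ and those that ``come from'' $k$. Throughout, let $\beta$ be a root of $f(x^{k})$, set $L=\Q(\beta)$, let $\alpha:=\beta^{k}$ (a root of $f$) and $K:=\Q(\alpha)$; since $f(x^{k})$ is irreducible so is $f$, whence $[L:\Q]=k\deg f$, $[K:\Q]=\deg f$, $[L:K]=k$, and $x^{k}-\alpha$ is the minimal polynomial of $\beta$ over $K$.

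\emph{Step 1 (using (1)): every prime dividing the index of $f(x^{k})$ divides $k\,f(0)$.} Since $f$ is monogenic, $\Z_{K}=\Z[\alpha]$, hence $\Z[\beta]=\Z[\alpha][\beta]=\Z_{K}[\beta]=\bigoplus_{i=0}^{k-1}\Z_{K}\beta^{i}$ is free of rank $k$ over the Dedekind domain $\Z_{K}$. I would combine the tower formula for discriminants $\operatorname{disc}(\Z[\beta]/\Z)=N_{K/\Q}\bigl(\operatorname{disc}(\Z_{K}[\beta]/\Z_{K})\bigr)\cdot\Delta(K)^{k}$, the computation $\operatorname{disc}(\Z_{K}[\beta]/\Z_{K})=\bigl(\operatorname{disc}(x^{k}-\alpha)\bigr)=\bigl(k^{k}\alpha^{k-1}\bigr)$, the relative identity $\Delta(L)=N_{K/\Q}(\mathfrak d_{L/K})\cdot\Delta(K)^{k}$, and the index--discriminant relation $D(f(x^{k}))=[\Z_{L}:\Z[\beta]]^{2}\,\Delta(L)$, to obtain $[\Z_{L}:\Z[\beta]]^{2}=N_{K/\Q}\bigl((k^{k}\alpha^{k-1})\,\mathfrak d_{L/K}^{-1}\bigr)$. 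Because $\Z_{K}[\beta]\subseteq\Z_{L}$, the ideal $(k^{k}\alpha^{k-1})\,\mathfrak d_{L/K}^{-1}$ is integral and divides $(k)^{k}(\alpha)^{k-1}$, so if $\ell$ divides $[\Z_{L}:\Z[\beta]]$ then some prime $\mathfrak p$ of $\Z_{K}$ over $\ell$ divides $(k)$ or $(\alpha)$; this forces $\ell\mid k$ or $\ell\mid N_{K/\Q}(\alpha)=\pm f(0)$.

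\emph{Step 2 (using (2)): if $p\mid k$ and $p\nmid f(0)$, then $p\nmid[\Z_{L}:\Z[\beta]]$.} Put $a=v_{p}(k)$, $k'=k/p^{a}$, and $\delta:=\beta^{k'}$, a root of $f(x^{p^{a}})$. Applying the standard identity $D(h(x^{m}))=\pm m^{m\deg h}h(0)^{m-1}D(h)^{m}$ (verified by resultants) with $h=f(x^{p^{a}})$ and $m=k'$, together with the index--discriminant relation and $\Delta(L)=N_{\Q(\delta)/\Q}(\mathfrak d_{L/\Q(\delta)})\,\Delta(\Q(\delta))^{k'}$, yields
\[
2\,v_{p}\bigl(\operatorname{index}(f(x^{k}))\bigr)+v_{p}\bigl(N_{\Q(\delta)/\Q}(\mathfrak d_{L/\Q(\delta)})\bigr)=2k'\,v_{p}\bigl(\operatorname{index}(f(x^{p^{a}}))\bigr)
\]
(using $p\nmid k'$ and $p\nmid f(0)$); as both terms on the left are $\ge 0$, it suffices to show $p\nmid\operatorname{index}(f(x^{p^{a}}))$. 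For this I would run Dedekind's criterion at $p$ for $f(x^{p^{a}})$, using $\overline{f(x^{p^{a}})}=\overline{f(x)}^{p^{a}}$ in $\mathbb F_{p}[x]$: writing $\overline{f(x)}=\prod_{i}\overline{g_{i}}^{e_{i}}$ (distinct monic irreducibles, lifts $g_{i}\in\Z[x]$), $G=\prod_{i}g_{i}^{e_{i}}$ and $f=G+pR$, the Dedekind polynomial of $f(x^{p^{a}})$ is
\[
M_{a}(x)=\tfrac1p\bigl(f(x^{p^{a}})-G(x)^{p^{a}}\bigr)=R(x^{p^{a}})+\tfrac1p\bigl(G(x^{p^{a}})-G(x)^{p^{a}}\bigr),
\]
and a telescoping estimate gives $\tfrac1p\bigl(G(x^{p^{a}})-G(x)^{p^{a}}\bigr)\equiv\bigl(\tfrac1p(G(x^{p})-G(x)^{p})\bigr)^{p^{a-1}}\pmod p$; since also $R(x^{p^{a}})\equiv R(x^{p})^{p^{a-1}}\pmod p$ and the case $a=1$ reads $M_{1}=R(x^{p})+\tfrac1p(G(x^{p})-G(x)^{p})$, the identity $(u+v)^{p^{a-1}}=u^{p^{a-1}}+v^{p^{a-1}}$ in $\mathbb F_{p}[x]$ gives $\overline{M_{a}}=\overline{M_{1}}^{p^{a-1}}$. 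Hence $\overline{g_{i}}\mid\overline{M_{a}}$ iff $\overline{g_{i}}\mid\overline{M_{1}}$ for every $i$, so $p\nmid\operatorname{index}(f(x^{p^{a}}))$ is equivalent to $p\nmid\operatorname{index}(f(x^{p}))$ --- which is hypothesis (2).

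Combining the steps, a prime dividing the index of $f(x^{k})$ divides $k\,f(0)$ by Step 1, and if it divides $k$ it already divides $f(0)$ by Step 2; hence it divides $f(0)$. The main obstacle is Step 2, specifically the congruence $\tfrac1p(G(x^{p^{a}})-G(x)^{p^{a}})\equiv\bigl(\tfrac1p(G(x^{p})-G(x)^{p})\bigr)^{p^{a-1}}\pmod p$ --- a coefficient-level Frobenius computation, in which the relevant difference telescopes into terms each divisible by $p$ save one, and that one reduces modulo $p$ to a $p^{a-1}$-th power. This is precisely the analysis carried out in the proof of Theorem~\ref{sufnec}, so in practice I would prove that theorem first and read the corollary off it; the remaining ingredients --- the tower formula, $\operatorname{disc}(x^{k}-\alpha)=\pm k^{k}\alpha^{k-1}$, $N_{K/\Q}(\alpha)=\pm f(0)$, and the identity for $D(h(x^{m}))$ --- are routine.
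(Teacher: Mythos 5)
Your proof is correct, and while its overall skeleton (separate the primes dividing $k$ from the rest, reduce $f(x^{k})$ to $f(x^{p^{a}})$ and then to $f(x^{p})$, exploit the monogenity of $f$ through the composed-discriminant formula) runs parallel to the paper, the two key reductions are carried out with genuinely different tools. The paper deduces the corollary from Proposition \ref{C2} and Corollary \ref{C1}, whose proofs rest on Uchida's maximal-ideal criterion (Theorem 2.B, Lemma \ref{maxint}) and on the ideal-theoretic Theorem \ref{T1} for the step $f(x^{p^{a}})\leftrightarrow f(x^{p})$; you instead prove that step by Dedekind's criterion together with the Frobenius congruence $\overline{M_{a}}=\overline{M_{1}}^{\,p^{a-1}}$, which I checked and which is a clean, self-contained alternative to Theorem \ref{T1}. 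For the primes away from the power part you use the exact conductor--discriminant tower formula with the relative different $\mathfrak d_{L/K}$ and $p$-adic valuations, where the paper gets by with the coarser divisibility $\Delta(K)^{[L:K]}\mid\Delta(L)$ (Lemma \ref{abcd}) plus Lemma \ref{dfpow}; your version gives the sharper quantitative identity, the paper's needs less machinery. A genuine advantage of your direct argument is that it bypasses the hardest part of the paper's Proposition \ref{C2}, namely the delicate case $p\mid f(0)$, $p^{2}\nmid f(0)$, which is indeed irrelevant here because one assumes $p\nmid f(0)$ for contradiction. Two small points to make explicit: you should note that $f(x^{p^{a}})$ is irreducible (it follows from the irreducibility of $f(x^{k})$ by a degree count on $\beta^{k/p^{a}}$) before speaking of its index and applying Dedekind's criterion to it, and in Step 1 the passage from $\mathfrak p\mid(\alpha)$ to $\ell\mid f(0)$ deserves the one-line justification that $f(0)\in(\alpha)$ in $\Z_{K}$.
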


In view of the above corollary, if $|f(0)|=1$ or the divisors of $f(0)$ do not divide the index of $f(x^k)$, then (1) and (2) together are equivalent to the monogenity of $f(x^k)$. This happens, for example, in the case of polynomials that are Eisenstein with respect to any prime divisor of $f(0)$, see Lemma \ref{th2.1}. These polynomials can be given of the form  $f(x)=x^d+A\cdot h(x)$, where $|h(0)|=1$ and $A$ is squarefree. Adding a further condition, the monogenity of $f(x)$ will be equivalent to the monogenity of $f(x^k)$.

\begin{corollary}\label{abcde}
     Let $f(x)\in\Z[x]$ be a polynomial of the form $f(x)=x^d+A\cdot h(x)$, where $d>1, |h(0)|=1$ and $\deg h(x)<d$. Let $k$ be a positive integer such that $f(x^k)$ is irreducible. If $\rad(k)$ divides $\rad(A)$, then $f(x)$ is monogenic if and only if $f(x^k)$ is monogenic.
 \end{corollary}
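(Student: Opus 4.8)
The plan is to derive both implications from Theorem~\ref{sufnec}; the case $k=1$ is trivial, so assume $k\ge 2$. The implication ``$f(x^k)$ monogenic $\implies f(x)$ monogenic'' is nothing but condition~(1) of Theorem~\ref{sufnec} (equivalently, Remark~\ref{TH1} with $t=1$), so all the work is in the converse: assuming $f(x)$ monogenic, I must verify that the three conditions of Theorem~\ref{sufnec} hold for $f(x^k)$.

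The first observation is that monogenity of $f(x)$, together with its special shape, forces $A$ to be squarefree. Indeed, if $p^2\mid A$ for some prime $p$, then $\overline f(x)=x^d$ over $\Z/p\Z$, so in the Dedekind criterion the only factor is $\overline g_1(x)=x$ with exponent $e_1=d>1$, while $M(x)=\tfrac1p\bigl(f(x)-x^d\bigr)=\tfrac Ap\,h(x)\equiv 0\pmod p$; hence $\overline g_1\mid\overline M$ and $p$ divides the index of $f(x)$, a contradiction. Since $f(0)=A\,h(0)$ and $|h(0)|=1$, we get $|f(0)|=|A|$, so $f(0)$ is squarefree: this is condition~(3).

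For condition~(2), note that $\rad(k)\mid\rad(A)$ means every prime $p\mid k$ divides $A$, and since $A$ is squarefree, $p^2\nmid A$. Then $f(x^p)=x^{dp}+A\,h(x^p)$ is Eisenstein at $p$: as $\deg h(x^p)=p\deg h<dp$, every non-leading coefficient is a multiple of $A$, hence of $p$, and the constant term $A\,h(0)$ is not divisible by $p^2$ (because $p^2\nmid A$ and $|h(0)|=1$). An Eisenstein polynomial at $p$ has index coprime to $p$ — in the Dedekind criterion $\overline M(0)=\overline{f(0)/p}\ne 0$, so $x\nmid\overline M$ — hence $p$ does not divide the index of $f(x^p)$; as this holds for every $p\mid k$, condition~(2) follows (cf.\ Lemma~\ref{th2.1}). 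Condition~(1) is the hypothesis, so Theorem~\ref{sufnec} yields that $f(x^k)$ is monogenic. Alternatively one may conclude through Corollary~\ref{TH2}: conditions~(1) and~(2) force every prime dividing the index of $f(x^k)$ to divide $f(0)=\pm A$, yet for each such prime $p$ the polynomial $f(x^k)=x^{dk}+A\,h(x^k)$ is itself Eisenstein at $p$, so no prime can divide the index.

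I do not anticipate a real obstacle, as Theorem~\ref{sufnec} carries the argument; the only genuinely new input is the elementary remark that for $f(x)=x^d+A\,h(x)$ with $|h(0)|=1$ the monogenity of $f(x)$ already implies $A$ (hence $f(0)$) is squarefree, which automatically discharges condition~(3). The only minor care needed is the coefficient bookkeeping in the Eisenstein arguments ($\deg h(x^p)=p\deg h<dp$, so that $x^{dp}$ is genuinely the leading term and all lower coefficients are multiples of $A$), which is routine.
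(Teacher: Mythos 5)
Your proposal is correct and follows essentially the same route as the paper: deduce from the monogenity of $f(x)$ that $A$ (hence $f(0)$) is squarefree, observe that for each prime $p\mid k$ (which divides $A$ by the radical hypothesis) the polynomial $f(x^p)$ is Eisenstein at $p$ so that $p$ does not divide its index (Lemma \ref{th2.1}), and conclude with Theorem \ref{sufnec}. The only cosmetic difference is that you verify the squarefreeness of $A$ via the Dedekind criterion, whereas the paper invokes Corollary \ref{resqu} and Theorem 2.B — the same underlying fact.
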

Note, that Theorem \ref{sufnec} implies that if $f(x^{p^u})$ is monogenic, then $f(x)$ is monogenic and $f(0)$ is squarefree.  In the following theorem we prove, that its  converse is true with an additional requirement. This theorem suggests that the monogenity of $f(x^{p^u})$ can be directly obtained from the monogenity of $f(x^p)$. More precisely, we give sufficient conditions for the monogenity of a monic irreducible prime power compositional polynomial. 
\begin{theorem}\label{uvw}
    Let $u$ be a positive integer, $p$ a prime number and $f(x)\in\Z[x]$ a monic polynomial. Suppose that $f(x^{p^u})$ is irreducible. Then the following statements hold:
    \begin{enumerate}
        \item If $f(x)$ is monogenic and $f(0)$ is squarefree, then the index of $f(x^{p^u})$ is equal to $p^s$, with some non-negative integer $s$. 
        \item $p$ does not divide the index of $f(x^{p^u})$ if and only if $\frac{1}{p}(f(x^p)-f(x)^p)$ is coprime to $f(x)$ modulo $p$.
        \end{enumerate}
\end{theorem}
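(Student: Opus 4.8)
The plan is to handle the two statements by complementary techniques: a completion argument at the primes $\ell\neq p$ for (1), and the Dedekind criterion at $p$, refined by an exponent-reduction identity, for (2). For part (1), let $\theta$ be a root of $f$ and $\beta$ a root of $f(x^{p^u})$ with $\beta^{p^u}=\theta$, so $K=\Q(\theta)\subseteq L=\Q(\beta)$ with $[L:K]=p^u$; since $f(x^{p^u})$ is irreducible of degree $p^u\deg f$, the polynomial $x^{p^u}-\theta$ is the minimal polynomial of $\beta$ over $K$, and the natural surjection $\Z[\theta][x]/(x^{p^u}-\theta)\to\Z[\beta]$ between free $\Z$-modules of the same rank $p^u\deg f$ is an isomorphism. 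Fix any prime $\ell\neq p$; the goal is to show $\ell\nmid[\Z_L:\Z[\beta]]$, which forces the index to be a power of $p$. Since $f$ is monogenic, $\Z_\ell\otimes\Z[\theta]=\prod_{\mathfrak p\mid\ell}\mathcal O_{\mathfrak p}$, the product over the primes $\mathfrak p$ of $K$ above $\ell$ of the corresponding complete valuation rings, so $\Z_\ell\otimes\Z[\beta]=\prod_{\mathfrak p\mid\ell}\mathcal O_{\mathfrak p}[x]/(x^{p^u}-\theta)$ and it suffices to check each factor is the maximal order of $K_{\mathfrak p}[x]/(x^{p^u}-\theta)$. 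The discriminant of $x^{p^u}-\theta$ over $\mathcal O_{\mathfrak p}$ equals $\pm(p^u)^{p^u}\theta^{p^u-1}$, of $\mathfrak p$-valuation $(p^u-1)v_{\mathfrak p}(\theta)$ since $\ell\neq p$. Because $f(0)$ is squarefree, $\sum_{\mathfrak p\mid\ell}v_{\mathfrak p}(\theta)\cdot(\text{residue degree of }\mathfrak p)=v_\ell(N_{K/\Q}(\theta))=v_\ell(f(0))\le 1$, so each $v_{\mathfrak p}(\theta)$ is $0$ or $1$: if $0$, the displayed discriminant is a unit and the factor is \'etale over $\mathcal O_{\mathfrak p}$, hence the maximal order; if $1$, then $\theta$ is a uniformizer of $\mathcal O_{\mathfrak p}$, so $x^{p^u}-\theta$ is Eisenstein and $\mathcal O_{\mathfrak p}[x]/(x^{p^u}-\theta)$ is again the maximal order.

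For part (2), the idea is to unwind the Dedekind criterion at $p$. Over $\mathbb F_p$ one has $\overline{f(x^{p^u})}=\overline f(x)^{p^u}$, so if $\overline f=\prod_i\overline\phi_i^{e_i}$ is the factorization into distinct monic irreducibles, then $\overline{f(x^{p^u})}=\prod_i\overline\phi_i^{e_ip^u}$ with every exponent $e_ip^u\ge 2$. Hence $p\nmid[\Z_L:\Z[\beta]]$ if and only if every $\overline\phi_i$ is coprime to $\overline M$, i.e. $\gcd(\overline f,\overline M)=1$ in $\mathbb F_p[x]$, where $M=\tfrac1p\bigl(f(x^{p^u})-(\prod_i\phi_i^{e_i})^{p^u}\bigr)$. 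Writing $\prod_i\phi_i^{e_i}=f-pM_f$ and expanding $(f-pM_f)^{p^u}$ binomially, every term beyond $f^{p^u}$ is divisible by $p^2$ (the linear term is $p^{u+1}f^{p^u-1}M_f$), so $M\equiv N_u\pmod p$, where $N_u:=\tfrac1p\bigl(f(x^{p^u})-f(x)^{p^u}\bigr)\in\Z[x]$; thus the condition is $\gcd(\overline f,\overline{N_u})=1$.

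To eliminate the dependence on $u$, write $x^{p^u}=(x^p)^{p^{u-1}}$ and use $f(x^p)=f(x)^p+pN_1(x)$ with $N_1=\tfrac1p(f(x^p)-f(x)^p)$: for $u\ge 2$ a short computation gives
\[
N_u(x)=N_{u-1}(x^p)+\tfrac1p\bigl(f(x^p)^{p^{u-1}}-f(x)^{p^u}\bigr),
\]
and the last term is $\equiv p^{u-1}f(x)^{p^u-p}N_1(x)\equiv 0\pmod p$, whence $N_u(x)\equiv N_{u-1}(x^p)\pmod p$. Since $h(x^p)=h(x)^p$ in $\mathbb F_p[x]$, iterating gives $\overline{N_u}=\overline{N_1}^{p^{u-1}}$, so $\gcd(\overline f,\overline{N_u})=1\iff\gcd(\overline f,\overline{N_1})=1$, the latter being precisely that $\tfrac1p(f(x^p)-f(x)^p)$ is coprime to $f(x)$ modulo $p$. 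I expect the main obstacle to be the $p$-adic bookkeeping in part (2): verifying that the Dedekind lift may be replaced by $f(x)^{p^u}$ modulo $p^2$ and that $N_u\equiv N_{u-1}(x^p)\pmod p$ holds for every prime $p$ (including $p=2$, where a few binomial coefficients gain an extra factor $2$ without changing the reductions). In part (1) the only delicate ingredient is the classical fact that an Eisenstein polynomial over a complete discrete valuation ring generates the full valuation ring of the extension it defines, even when the ramification is wild.
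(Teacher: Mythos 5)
Your proof is correct, but it follows a genuinely different route from the paper in both parts. For (1), the paper stays global and elementary: it computes $D(f(x^{p^u}))=\pm D(f)^{p^u}\cdot p^{udp^u}\cdot f(0)^{p^u-1}$ (Lemma \ref{dfpow}), combines this with $\Delta(K)^{[L:K]}\mid\Delta(L)$ (Lemma \ref{abcd}) and Equation \eqref{eq} to confine the index's prime divisors to $p$ and the divisors of $f(0)$, and then invokes Proposition \ref{C2} to eliminate the primes $q\neq p$ dividing $f(0)$; you instead localize and complete at each $\ell\neq p$, use monogenicity to write $\Z_\ell\otimes\Z[\beta]=\prod_{\mathfrak p\mid\ell}\mathcal O_{\mathfrak p}[x]/(x^{p^u}-\theta)$, and observe that squarefreeness of $f(0)$ forces $v_{\mathfrak p}(\theta)\in\{0,1\}$, so each factor is either \'etale or Eisenstein and hence maximal. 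Your argument needs heavier machinery (completions, \'etale algebras, the Eisenstein fact over complete discrete valuation rings, which is essentially the paper's Lemma \ref{th2.1}) but is conceptually clean and in effect reproves the relevant case of Proposition \ref{C2}, whereas the paper's version reuses lemmas that also serve its other theorems. For (2), the paper first reduces from exponent $p^u$ to $p$ via Corollary \ref{C1}, whose engine is the ideal-theoretic Theorem \ref{T1} ($f(x)\in\langle p,g(x)\rangle^2\iff f(x^p)\in\langle p,g(x)\rangle^2$ in the Uchida framework), and then characterizes the condition at level $p$ by membership in $\langle p^2,g_i(x)\rangle$; you apply the Dedekind criterion directly to $f(x^{p^u})$, show the Dedekind lift may be replaced by $f(x)^{p^u}$ modulo $p^2$ so that $\overline M=\overline{N_u}$ with $N_u=\tfrac1p\bigl(f(x^{p^u})-f(x)^{p^u}\bigr)$, and collapse the exponent through the Frobenius congruence $\overline{N_u}=\overline{N_1}^{\,p^{u-1}}$, which has the same irreducible factors as $\overline{N_1}$. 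Both computations are sound (your $p^2$-divisibility bookkeeping and the identity $N_u\equiv N_{u-1}(x^p)\pmod p$ check out, including $p=2$, and the Dedekind criterion is indeed insensitive to the choice of monic lifts); your version buys an explicit closed-form congruence and avoids the maximal-ideal formalism, while the paper's version buys a reduction lemma (Theorem \ref{T1}/Corollary \ref{C1}) that it exploits again in Theorem \ref{sufnec} and Corollary \ref{TH2}. One cosmetic remark: in your case $v_{\mathfrak p}(\theta)=1$ the ramification is automatically tame since $\ell\neq p$, so the appeal to the wild case of the Eisenstein fact is not needed.
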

\begin{remark}\label{remcop}
Note, that if $u=1$ in the above theorem, then the condition (2) of Theorem \ref{sufnec} is equivalent to the condition that $\frac{1}{p}(f(x^p)-f(x)^p)$ is coprime to $f(x)$ modulo $p$, for every prime $p\mid k$.
\end{remark}

Combining the Corollary \ref{abcde} and Theorem \ref{uvw} we get the statement below, which also follows from Lemma 3.1 of L. Jones \cite{ljj1}.

\begin{corollary}\label{xyz}
    Let $p$ be a prime number. Suppose, that $f(x)\in\Z[x]$ is a monic and Eisenstein polynomial with respect to $p$. If $f(0)$ is squarefree, then $f(x)$ is monogenic if and only if $f(x^{p^u})$ is monogenic for any positive integer $u$. 
\end{corollary}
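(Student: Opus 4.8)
I will deduce both directions of the equivalence from Theorems \ref{sufnec} and \ref{uvw}, so the first thing to record is that Theorem \ref{uvw} is applicable here: since $f(x)$ is Eisenstein with respect to $p$, so is $f(x^{p^u})$ --- it has the same constant term $f(0)$, with $p\mid f(0)$ and $p^2\nmid f(0)$ because $f(0)$ is squarefree, and every non-leading coefficient is still divisible by $p$ --- hence $f(x^{p^u})$ is irreducible. The implication ``$f(x^{p^u})$ monogenic $\Rightarrow$ $f(x)$ monogenic'' then requires no work: it is Remark \ref{TH1} (equivalently, condition~(1) of Theorem \ref{sufnec}) applied to the divisor $t=1$ of $p^u$.

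For the converse I would argue as follows. Assume $f(x)$ is monogenic; since $f(0)$ is squarefree, part~(1) of Theorem \ref{uvw} already tells me the index of $f(x^{p^u})$ is a power $p^s$ of $p$, so it remains only to show $p$ does not divide it. By part~(2) of Theorem \ref{uvw} this is equivalent to the polynomial $M(x):=\frac1p\bigl(f(x^p)-f(x)^p\bigr)$ being coprime to $f(x)$ modulo $p$. The Eisenstein hypothesis gives $\overline f(x)=x^{\deg f}$ in $\mathbb F_p[x]$, so coprimality with $\overline f$ is just the condition $\overline M(0)\neq0$, i.e. $p\nmid M(0)$. Writing $f(0)=pc$ with $\gcd(c,p)=1$ --- this uses the squarefreeness of $f(0)$ --- one finds $M(0)=\frac1p(pc-p^pc^p)=c-p^{p-1}c^p\equiv c\pmod p$, which is nonzero mod $p$ as $p\geq2$. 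Hence $p$ does not divide the index of $f(x^{p^u})$, forcing $s=0$; equivalently, one may observe that $p\nmid M(0)$ is exactly condition~(2) of Theorem \ref{sufnec} for $k=p$ (Remark \ref{remcop}), while conditions~(1) and (3) hold by hypothesis, so Theorem \ref{sufnec} yields the monogenity of $f(x^{p^u})$ for every $u$ simultaneously.

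I expect no genuine obstacle: once Theorems \ref{sufnec} and \ref{uvw} are in hand the proof is essentially bookkeeping --- verifying irreducibility of $f(x^{p^u})$ and identifying which of the three conditions of Theorem \ref{sufnec} are automatic --- and the one computation, the value of $M(0)$ modulo $p$, is immediate because the Eisenstein condition collapses $\overline f$ to a pure power of $x$, reducing coprimality to non-vanishing at the origin. The only place where squarefreeness of $f(0)$ is really used is in guaranteeing $p\parallel f(0)$, hence $p\nmid c$; this is consistent with the result also being readable off from Lemma~3.1 of \cite{ljj1}.
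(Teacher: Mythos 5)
Your proof is correct, and it uses the paper's machinery but follows a slightly different path. The paper dispatches this corollary in one line by combining Corollary \ref{abcde} with Theorem \ref{uvw}; the Eisenstein input hidden in that citation is Lemma \ref{th2.1}, which immediately gives that $p$ does not divide the index of the (again Eisenstein) polynomials $f(x^p)$ and $f(x^{p^u})$, so that Theorem \ref{sufnec} (or Theorem \ref{uvw}(1)) finishes the argument. You instead verify condition (2) of Theorem \ref{sufnec} by hand via Theorem \ref{uvw}(2): since $\overline{f}(x)=x^{\deg f}$ modulo $p$, coprimality of $\frac{1}{p}\bigl(f(x^p)-f(x)^p\bigr)$ with $f(x)$ reduces to non-vanishing of its constant term, and $M(0)=\frac{1}{p}\bigl(f(0)-f(0)^p\bigr)\equiv f(0)/p\not\equiv 0 \pmod p$. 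This is a correct, self-contained re-derivation of the Eisenstein index lemma in this special case; it even has the small advantage that it does not require $f$ to be literally of the shape $x^d+A\cdot h(x)$ with $|h(0)|=1$ as in Corollary \ref{abcde} (a general Eisenstein polynomial need not have all lower coefficients divisible by $f(0)$), so your route is arguably the more directly applicable one. Your preliminary observation that $f(x^{p^u})$ is again Eisenstein, hence irreducible, and your use of Lemma \ref{L1}/Remark \ref{TH1} for the easy direction, match the paper's intent.

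One caveat about your closing sentence: the squarefreeness of $f(0)$ is not used only to guarantee $p\parallel f(0)$ (the Eisenstein condition already gives that); it is also exactly what excludes the other prime divisors of $f(0)$ from dividing the index, through Theorem \ref{uvw}(1), equivalently condition (3) of Theorem \ref{sufnec} and Proposition \ref{re}. If, say, $f(0)=pq^2$ with $q\neq p$ prime, $f$ could still be Eisenstein with respect to $p$, yet $f(x^{p^u})$ would not be monogenic. Since the body of your argument invokes Theorem \ref{uvw}(1) and Theorem \ref{sufnec}(3) correctly, this is only a misstatement in the commentary, not a gap in the proof.
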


Taking Remark \ref{remcop} into account, when $f(x)=x^d+A\cdot h(x)\in\Z[x]$, with $|h(0)|=1$ and $\deg h(x)<d$, we can provide necessary and sufficient conditions for the monogenity of the irreducible power compositional polynomial $f(x^k)$. As an application of this theorem, we constructed an infinite family of polynomials for a specific choice of $h(x)$, given in the last section.
    \begin{theorem}\label{abc} Let $f(x)\in\Z[x]$ be a polynomial of the form $f(x)=x^d+A\cdot h(x)$, where $d>1, |h(0)|=1$ and $\deg h(x)<d$. Let $k$ be a positive integer such that $f(x^k)$ is irreducible. Then $f(x^k)$ is monogenic if and only if 
	\begin{enumerate}
		\item $f(x)$ is monogenic, and
		\item $\frac{1}{p}(A\cdot h(x^p)+(-A\cdot h(x))^p)$ is coprime to $f(x)$ modulo $p$, for any prime $p\mid k$.
	\end{enumerate}   \end{theorem}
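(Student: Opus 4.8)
The plan is to deduce Theorem~\ref{abc} from Theorem~\ref{sufnec} together with Remark~\ref{remcop} (for $k=1$ the statement is trivial, so assume $k\ge 2$). By those results, for an irreducible $f(x^k)$ the monogenity of $f(x^k)$ is equivalent to the conjunction of: (a) $f(x)$ is monogenic; (b) $\tfrac1p\bigl(f(x^p)-f(x)^p\bigr)$ is coprime to $f(x)$ modulo $p$ for every prime $p\mid k$; and (c) $f(0)$ is squarefree. So two things need to be shown. First, that for $f(x)=x^d+A\cdot h(x)$ with $\deg h<d$ and $|h(0)|=1$, condition~(b) is equivalent to condition~(2) of Theorem~\ref{abc}. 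Second, that in this setting condition~(a) already forces condition~(c); granting these, (a)$\wedge$(b)$\wedge$(c) collapses to (a)$\wedge$(b), which is precisely (1)$\wedge$(2).

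For the first point I would prove the congruence
\[
\tfrac1p\bigl(f(x^p)-f(x)^p\bigr)\;\equiv\;\tfrac1p\bigl(A\cdot h(x^p)+(-A\cdot h(x))^p\bigr)\pmod{(p,\,f(x))}
\]
for every prime $p$, and then note that coprimality to $f(x)$ modulo $p$ is unaffected if a polynomial is changed by an element of the ideal $(p,f(x))\subseteq\Z[x]$, since modulo $p$ this alters the reduction only by a multiple of $\overline f$. To obtain the congruence, expand $\bigl(x^d+A\cdot h(x)\bigr)^p$ by the binomial theorem: the $j=0$ term cancels the $x^{dp}$ coming from $f(x^p)$; the $j=p$ term accounts for $(-A\cdot h(x))^p$ (exactly, when $p$ is odd); and for $1\le j\le p-1$ one has $\binom pj\equiv 0\pmod p$ with $\tfrac1p\binom pj\equiv(-1)^{j-1}j^{-1}\pmod p$. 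Reducing each surviving monomial modulo $f(x)$ via $x^d\equiv -A\cdot h(x)$ sends $x^{d(p-j)}(A\cdot h(x))^j$ to $(-1)^{p-j}(A\cdot h(x))^p$, so the cross term collapses, modulo $(p,f(x))$, to $(A\cdot h(x))^p\sum_{j=1}^{p-1}j^{-1}$, which vanishes mod $p$ for odd $p$. For $p=2$ one instead uses the exact identity $A^2h(x)^2+x^d\cdot A\cdot h(x)=A\cdot h(x)\cdot f(x)$ to absorb the leftover terms. This yields (b)$\iff$(2).

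For the second point, suppose some prime $q$ satisfies $q^2\mid A$. Then $\overline f(x)=x^d$ in $\mathbb{F}_q[x]$, so $x$ is its only irreducible factor and it occurs with multiplicity $d>1$; moreover $M(x)=\tfrac1q\bigl(f(x)-x^d\bigr)=\tfrac{A}{q}\cdot h(x)$ has constant term divisible by $q$ because $q\mid\tfrac{A}{q}$. Hence $x\mid\overline M$, so $x$ and $\overline M$ are not coprime, and Dedekind's criterion forces $q$ to divide the index of $f(x)$, contradicting its monogenity. Therefore $A$ is squarefree, and since $|h(0)|=1$ we get $|f(0)|=|A|$, so $f(0)$ is squarefree; that is, (a)$\Rightarrow$(c). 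Assembling the pieces gives that $f(x^k)$ is monogenic if and only if (1) and (2) hold.

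The hard part will be the congruence of the second paragraph — checking that the binomial cross terms truly vanish modulo $(p,f(x))$ under the substitution $x^d\equiv -A\cdot h(x)$, together with the elementary identity $\sum_{j=1}^{p-1}j^{-1}\equiv 0\pmod p$, and handling the exceptional prime $p=2$ separately. Everything else is bookkeeping layered on top of Theorem~\ref{sufnec}.
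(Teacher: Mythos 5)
Your proposal is correct, and it reaches the result by a route that differs from the paper's in two concrete places. For the equivalence between the coprimality of $\tfrac1p\bigl(f(x^p)-f(x)^p\bigr)$ and that of $\tfrac1p\bigl(A\cdot h(x^p)+(-A\cdot h(x))^p\bigr)$ with $f(x)$ modulo $p$, you expand $\bigl(x^d+A\cdot h(x)\bigr)^p$ directly and must then control the cross terms via $\tfrac1p\binom pj\equiv(-1)^{j-1}j^{-1}\pmod p$, the reduction $x^d\equiv -A\cdot h(x)$ modulo $\langle p,f(x)\rangle$, the vanishing of $\sum_{j=1}^{p-1}j^{-1}$ mod $p$, and a separate identity for $p=2$; all of these steps check out. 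The paper instead writes $f(x^p)=\bigl(f(x)-A\cdot h(x)\bigr)^p+A\cdot h(x^p)$, so every cross term is visibly of the form $p\cdot f(x)\cdot r(x)$ with $r(x)\in\Z[x]$, and the equivalence follows at once, uniformly in $p$ (including $p=2$) and with no binomial-coefficient or harmonic-sum arithmetic --- a noticeably shorter computation, though yours buys nothing less. For sufficiency, you reduce cleanly to Theorem \ref{sufnec} by proving that monogenity of $f(x)$ forces $A$ (hence $f(0)=\pm A$) to be squarefree via Dedekind's criterion; this is sound and in fact mirrors the argument the paper uses for Corollary \ref{abcde}. The paper's own proof of Theorem \ref{abc} takes a different endgame: it deduces squarefreeness of $A$ from condition (2) via Proposition \ref{re}, observes that $f(x^k)$ is then Eisenstein with respect to each prime dividing $A$ so that Lemma \ref{th2.1} rules those primes out, and concludes with Corollary \ref{TH2} rather than with condition (3) of Theorem \ref{sufnec}. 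Both assemblies are valid; yours has the slightly tidier logical skeleton (monogenic $\iff$ (a)$\wedge$(b)$\wedge$(c), with (a)$\Rightarrow$(c) and (b)$\iff$(2)), while the paper's grouping trick makes the polynomial identity essentially effortless.
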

\section{Preliminary results}
In what follows, for a prime $p$ and a polynomial $g(x) \in \Z[x],~ \overline{g} (x)$ will
denote the polynomial obtained by reducing each coefficient of $g(x)$ modulo $p$. For a subset $S\subset \Z[x]$, let $\langle S\rangle$ be the ideal generated by $S$. If $S=\{s_1,s_2,\ldots,s_k\}$ is finite, then 
$$\langle S\rangle=\{s_1a_1+s_2a_2+\ldots+s_ka_k\mid a_i\in\Z[x],~1\leq i\leq k\}.$$

To investigate the monogenity of $f(x)$, we will follow the approach of K. Uchida, so we recall Lemma 2.A and Theorem 2.B proved in \cite{UC}.

\noindent \textbf{Lemma 2.A.}
An  ideal $\mathfrak M$ of $\mathbb{Z}[x]$ containing a monic polynomial is maximal if and only if $\mathfrak M =  \langle p, g(x) \rangle $  for some prime number   $p$  and a monic polynomial  $g(x)$ belonging to $\mathbb Z [x]$ which is irreducible modulo $p$.


\noindent \textbf{Theorem 2.B.}
Let  $K=\mathbb Q(\theta)$  with  $\theta$ in $\Z_K$ having  minimal polynomial  $f(x)$   over   $\mathbb Q$,  then $\Z_K=\Z[\theta]$ if and only if $f(x)$ does not belong to $\mathfrak M ^2$ for any maximal ideal $\mathfrak M$ of the polynomial ring $\mathbb Z[x]$.

In particular, a prime $p$ divides the index $[\Z_K:\Z[\theta]]$ if and only if $f(x)$ belongs to the square of a maximal ideal of the form $\mathfrak M=\langle p, g(x) \rangle$, where $g(x)\in\Z[x]$ is a monic polynomial which is irreducible modulo $p$. We can split the condition $f(x)\in\langle p, g(x) \rangle^2$ into two parts. 

\begin{lemma}\label{maxint}
    Let $p$ be a prime and $g(x)\in\Z[x]$ be a monic polynomial which is irreducible modulo $p$. Then
    $\langle p,g(x) \rangle^2=\langle p^2,g(x)\rangle \cap \langle p,g^2(x)\rangle.$
\end{lemma}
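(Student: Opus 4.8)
The plan is to prove the two inclusions separately; both are elementary, and the only substantive point is a single divisibility observation. For the easy inclusion $\langle p,g(x)\rangle^2\subseteq\langle p^2,g(x)\rangle\cap\langle p,g^2(x)\rangle$, I first expand $\langle p,g(x)\rangle^2=\langle p^2,\,pg(x),\,g^2(x)\rangle$ and then check that each of the three generators lies in both ideals on the right. This is immediate: e.g.\ $pg(x)=g(x)\cdot p\in\langle p^2,g(x)\rangle$ and $pg(x)=p\cdot g(x)\in\langle p,g^2(x)\rangle$, and similarly $p^2$ and $g^2(x)$ obviously lie in both.

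For the reverse inclusion, take $h(x)$ in the intersection. Membership in each of the two ideals gives $h=p^2a+gb$ and $h=pc+g^2d$ for suitable $a,b,c,d\in\Z[x]$. Subtracting yields $g\,(b-gd)=p\,(c-pa)$, so $p$ divides the product $g(x)\,(b(x)-g(x)d(x))$ in $\Z[x]$. Since $\Z[x]/p\Z[x]\cong(\Z/p\Z)[x]$ is an integral domain, $p$ is a prime element of $\Z[x]$; and $g(x)$ is monic, so its reduction $\overline{g}(x)$ is nonzero, i.e.\ $p\nmid g(x)$. Hence $p\mid(b-gd)$, say $b=gd+pe$ with $e\in\Z[x]$. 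Substituting back into $h=p^2a+gb$ gives $h=p^2a+g^2d+pge\in\langle p^2,\,g^2(x),\,pg(x)\rangle=\langle p,g(x)\rangle^2$, as required.

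I do not expect any real obstacle here: everything reduces to the fact that $p$ does not divide the monic polynomial $g(x)$ in $\Z[x]$, which in turn is what makes $p$ behave like a prime modulo which $g$ stays visible. Note that the full hypothesis ``$g$ is irreducible modulo $p$'' is stronger than what the argument uses — only $\overline{g}(x)\neq 0$ is needed — but I would keep the hypothesis as stated since it is the natural setting inherited from Lemma~2.A and from how this lemma will be applied to the condition $f(x)\in\langle p,g(x)\rangle^2$.
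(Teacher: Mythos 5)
Your proof is correct. It follows the same skeleton as the paper's: write out the two memberships $h=p^2a+gb=pc+g^2d$, equate, and use a primality argument to upgrade one of the terms — but you pivot on a different prime. You observe $p\mid g\,(b-gd)$ and use that $p$ is a prime element of $\Z[x]$ (since $\Z[x]/p\Z[x]\cong(\Z/p\Z)[x]$ is a domain) together with $p\nmid g$ (as $g$ is monic), concluding $b=gd+pe$ and hence $h=p^2a+g^2d+pge\in\langle p,g(x)\rangle^2$. The paper instead reads the same equation as $g\mid p\,(c-pa)$ and uses that $\langle g(x)\rangle$ is a prime ideal of $\Z[x]$ — which is where the hypothesis that $g$ is irreducible modulo $p$ (hence irreducible in $\Z[x]$) actually enters — to get $c\in\langle p,g(x)\rangle$ and then $h=pc+g^2d\in\langle p^2,pg(x),g^2(x)\rangle$. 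The two routes are mirror images, but yours is slightly more economical: it needs only $\overline{g}(x)\neq 0$, so, as you note, the equality holds for any monic $g$ (irreducibility mod $p$ is not used), whereas the paper's argument genuinely relies on it. Keeping the stated hypothesis is fine for how the lemma is applied, but your remark that it can be weakened is accurate.
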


\begin{proof}
    Clearly,
    $\langle p,g(x) \rangle^2\subseteq\langle p^2,g(x)\rangle \cap \langle p,g^2(x)\rangle$. To obtain the other direction of the containment, suppose that the polynomial $f(x)$ belongs to both $\langle p^2,g(x)\rangle$ and $\langle p,g^2(x)\rangle$, i.e., there exist $a(x),b(x),c(x),d(x)$ belonging to $\Z[x]$, such that
    $$p^2\cdot a(x)+g(x)\cdot b(x)=f(x)=p\cdot c(x)+g^2(x)\cdot d(x).$$
    Since $g(x)$ is irreducible modulo $p$, it is irreducible in $\Z[x]$. So $\langle g(x)\rangle$ is a prime ideal of $\Z[x]$. By the above equation, $p\cdot (c(x)-p\cdot a(x))$ is a multiple of $g(x)$, i.e., it belongs to the prime ideal $\langle g(x)\rangle$. Obviously, $p$ does not belong to $\langle g(x)\rangle$, thus $c(x)-p\cdot a(x)\in\langle g(x)\rangle$ and therefore $c(x)\in \langle p, g(x)\rangle$. This implies that $p\cdot c(x) \in \langle p^2, p\cdot g(x)\rangle$, and from there
    $f(x)=p\cdot c(x)+g^2(x)\cdot d(x) \in \langle p^2, p\cdot g(x),g^2(x)\rangle$. Hence $f(x)\in \langle p,g(x) \rangle^2.$
\end{proof} 

    If $f(x)\in \langle p,g^2(x)\rangle$, then $\overline{g}(x)$ is a multiple factor of $\overline{f}(x)$. On the other hand, $f(x)\in \langle p^2,g(x)\rangle$ implies that the remainder of $f(x)$ divided by $g(x)$ is a multiple of $p^2$. Therefore, the results of Uchida \cite{UC} say that $p$ divides the index of $f(x)$ if and only if there exists a monic polynomial $g(x)\in\Z[x]$, which is a multiple irreducible factor of $f(x)$ modulo $p$, such that the remainder of $f(x)$ divided by $g(x)$ is a multiple of $p^2$. This also follows from the results of Ore \cite{ore} and from the Dedekind Criterion.

\begin{corollary}\label{resqu}
    Let $f(x)$ and $g(x)$ belonging to $\Z[x]$ be monic polynomials and let $p$ be a prime number such that $g(x)$ is irreducible modulo $p$. Suppose that $f(x)\in\langle p,g^2(x) \rangle$, then
    $$f(x)\in\langle p, g(x) \rangle^2\iff  f(x)\in\langle p^2, g(x) \rangle.$$
\end{corollary}

Considering a prime divisor $p$ of $f(0)$, it is easy to see that $x$ is a factor of $f(x)$ modulo $p$ and it is a multiple factor of $f(x^\ell)$ modulo $p$ if $\ell>1$. So, by using the above corollary, we obtain the following result.

\begin{proposition}\label{re}
    Let $f(x)\in\Z[x]$ be a monic polynomial and $p$ be a prime, such that $p^2\mid f(0)$. Then $p$ divides the index of $f(x^\ell)$ for any positive integer $\ell>1$.
\end{proposition}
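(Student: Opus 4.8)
The plan is to apply Corollary \ref{resqu} with $f(x^\ell)$ playing the role of ``$f$'' and $g(x)=x$ playing the role of ``$g$''; equivalently, to use the criterion recalled in the discussion preceding Corollary \ref{resqu}, that $p$ divides the index of a monic polynomial $F(x)$ exactly when some monic $g(x)\in\Z[x]$ is a multiple irreducible factor of $\overline{F}(x)$ modulo $p$ while the remainder of $F(x)$ on division by $g(x)$ is divisible by $p^2$.

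First I would check that $x$ is a multiple irreducible factor of $\overline{f(x^\ell)}$ modulo $p$. Since $p^2\mid f(0)$ we have in particular $p\mid f(0)$, hence $\overline{f}(0)=0$, so $x\mid\overline{f}(x)$ in $(\Z/p\Z)[x]$. Writing $\overline{f}(x)=x\,\overline{h}(x)$, we get $\overline{f(x^\ell)}=x^\ell\,\overline{h}(x^\ell)$, and since $\ell>1$ this is divisible by $x^2$; thus $x$ is a multiple factor of $\overline{f(x^\ell)}$, and it is of course irreducible modulo $p$. In the language of ideals this says $f(x^\ell)\in\langle p,x^2\rangle$, and, since $x$ is monic and irreducible modulo $p$, $\langle p,x\rangle$ is a maximal ideal of $\Z[x]$ by Lemma 2.A.

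Next I would observe that the remainder of $f(x^\ell)$ on division by $x$ is its constant term, namely $f(0)$, which is divisible by $p^2$ by hypothesis; equivalently, $f(x^\ell)\in\langle p^2,x\rangle$. Corollary \ref{resqu} (applied to $f(x^\ell)$ and $g(x)=x$) then gives $f(x^\ell)\in\langle p,x\rangle^2$, and by Uchida's Theorem 2.B this means precisely that $p$ divides the index of $f(x^\ell)$.

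There is no real obstacle here: the whole content sits in Lemma \ref{maxint}/Corollary \ref{resqu} together with Uchida's Theorem 2.B. The only points worth flagging are that the hypothesis $\ell>1$ (not merely $\ell\ge 1$) is precisely what upgrades ``$x\mid\overline{f(x^\ell)}$'' to ``$x^2\mid\overline{f(x^\ell)}$'', making $x$ a multiple factor and bringing Corollary \ref{resqu} into play, and that $p^2\mid f(0)$ (not just $p\mid f(0)$) is exactly what guarantees the remainder on division by $x$ is divisible by $p^2$.
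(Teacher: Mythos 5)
Your proof is correct and follows essentially the same route as the paper: show $f(x^\ell)\in\langle p,x^2\rangle$ (using $\ell>1$) and $f(x^\ell)\in\langle p^2,x\rangle$ (using $p^2\mid f(0)$, since the remainder modulo $x$ is $f(0)$), then conclude via Corollary \ref{resqu} and Theorem 2.B. The only difference is that you spell out the factorization $\overline{f(x^\ell)}=x^\ell\,\overline{h}(x^\ell)$ explicitly, which the paper leaves implicit.
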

\begin{proof}
    Since $p^2$ divides $f(0)$, we can see that $x$ is a multiple factor of $f(x^\ell)$ modulo $p$ for any integer $\ell>1$. Therefore $f(x^\ell)\in\langle p,x^2\rangle$. Moreover, the remainder of $f(x^\ell)$ divided by $x$ is $f(0)$, which is a multiple of $p^2$, thus $f(x^\ell)\in \langle p^2,x\rangle$. By Corollary \ref{resqu} and Theorem 2.B,  $p$ divides the index of $f(x^\ell)$.
\end{proof} 

In the following theorem, we show that if $\bar{g}(x)$ is a multiple irreducible factor of $\bar{f}(x)$, then the remainder of $f(x)$ divided by $g(x)$ is a multiple of $p^2$ if and only if the remainder of $f(x^p)$ divided by $g(x)$ is a multiple of $p^2$. This observation is important for the proof of our main results and it is of independent interest as well. This allows us to ignore the exponents of the primes in the factorization of $k$ and to extend the monogenity to infinite families of polynomials of unlimited degree.

\begin{theorem}\label{T1}
    Let $f(x)$ and $g(x)$ belonging to $\Z[x]$ be monic polynomials and let $p$ be a prime number such that $g(x)$ is irreducible modulo $p$. Suppose that $f(x)\in\langle p,g^2(x) \rangle$. Then 
    $$f(x)\in\langle p, g(x) \rangle^2\iff f(x^p)\in\langle p, g(x) \rangle^2.$$
\end{theorem}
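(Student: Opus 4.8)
The plan is to reduce the asserted equivalence, via Corollary \ref{resqu}, to the analogous statement for the ideal $\langle p^2, g(x)\rangle$, and then to carry out the comparison inside the quotient ring $A:=\Z[x]/\langle g(x)\rangle$, which is a free---hence $\Z$-torsion-free---$\Z$-module because $g$ is monic. The engine of the whole argument is the single observation that reduction modulo $p$ turns the substitution $x\mapsto x^p$ into the Frobenius endomorphism of $\F_p[x]$, so that $\overline g(x^p)=\overline g(x)^p$.

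First I would verify that the hypothesis is inherited by $f(x^p)$. Since $f(x)\in\langle p,g^2(x)\rangle$ means $\overline g(x)^2\mid\overline f(x)$ in $\F_p[x]$, applying the ring endomorphism $x\mapsto x^p$ gives $\overline g(x)^{2p}\mid\overline f(x^p)$, hence a fortiori $\overline g(x)^2\mid\overline f(x^p)$, i.e.\ $f(x^p)\in\langle p,g^2(x)\rangle$. As $f(x^p)$ is again monic, Corollary \ref{resqu} applies to both $f(x)$ and $f(x^p)$, so the theorem becomes equivalent to
$$f(x)\in\langle p^2,g(x)\rangle\iff f(x^p)\in\langle p^2,g(x)\rangle .$$

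To establish this, write $f(x)=g(x)^2h(x)+p\,m(x)$ with $h,m\in\Z[x]$ (possible by the hypothesis) and $g(x^p)=g(x)^p+p\,n(x)$ with $n\in\Z[x]$ (Frobenius once more). Membership in $\langle p^2,g(x)\rangle$ is the same as the class in $A$ lying in $p^2A$. The class of $g(x^p)$ in $A$ equals $p$ times the class of $n(x)$, so the class of $g(x^p)^2h(x^p)$ lies in $p^2A$; therefore the class of $f(x)$ in $A$ equals $p$ times the class of $m(x)$, while the class of $f(x^p)$ in $A$ equals $p$ times the class of $m(x^p)$ plus an element of $p^2A$. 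Since $A$ is torsion-free, $p\xi\in p^2A$ is equivalent to $\xi\in pA$; discarding the harmless $p^2$-term we obtain that $f(x)\in\langle p^2,g(x)\rangle$ iff the class of $m(x)$ lies in $pA$, and $f(x^p)\in\langle p^2,g(x)\rangle$ iff the class of $m(x^p)$ lies in $pA$. Equivalently, these conditions say that $\overline g(x)$ divides $\overline m(x)$, respectively $\overline m(x^p)$, in $\F_p[x]$.

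Finally, $\overline m(x^p)=\overline m(x)^p$ by the Frobenius, and $\overline g(x)$ is irreducible, hence prime, in $\F_p[x]$; therefore $\overline g\mid\overline m^p$ if and only if $\overline g\mid\overline m$, which yields the desired equivalence. I expect the only delicate point to be the bookkeeping in $A$---checking that squaring $g(x^p)$ contributes only a multiple of $p^2$ and that $\Z$-torsion-freeness is invoked correctly---but there is no genuine obstacle beyond this: the content of the theorem is precisely the Frobenius identity together with the primality of $\overline g(x)$ in $\F_p[x]$.
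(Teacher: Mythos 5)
Your argument is correct, and every step checks out: the passage from $f(x)\in\langle p,g^2(x)\rangle$ to $f(x^p)\in\langle p,g^2(x)\rangle$ via Frobenius, the reduction of both squared-ideal memberships to $\langle p^2,g(x)\rangle$ via Corollary \ref{resqu}, the computation in $A=\Z[x]/\langle g(x)\rangle$ using that $A$ is a free (hence torsion-free) $\Z$-module, and the final step $\overline g\mid \overline m^{\,p}\iff\overline g\mid\overline m$ from the primality of $\overline g$ in $\F_p[x]$. Your route is, however, organized differently from the paper's. The paper proves Theorem \ref{T1} directly at the level of the ideal $\langle p,g(x)\rangle^2$ in $\Z[x]$: it writes $f(x)=g^2(x)q(x)+p\,g(x)s(x)+p\,t(x)$ with $\deg t<\deg g$, observes that the $g^2$ and $pg$ terms (and, after substitution, $g^2(x^p)$ and $p\,g(x^p)$, using $g(x^p)\equiv g(x)^p \bmod p$) already lie in $\langle p,g(x)\rangle^2$, reduces everything to comparing $p\,t(x)$ with $p\,t(x)^p$, and closes the argument with the primality of $\langle p\rangle$ and of the maximal ideal $\langle p,g(x)\rangle$; Corollary \ref{resqu} is not invoked there. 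You instead use Corollary \ref{resqu} as the key lemma to trade $\langle p,g(x)\rangle^2$ for $\langle p^2,g(x)\rangle$ (which requires your extra check that $f(x^p)\in\langle p,g^2(x)\rangle$, done correctly), and then phrase the comparison as a statement about classes in $p^2A$, with $\Z$-torsion-freeness replacing the paper's prime-ideal manipulations. The two proofs share the same two essential ingredients (the Frobenius congruences and the irreducibility of $\overline g$); yours buys a cleaner, more modular bookkeeping in the quotient ring, while the paper's is self-contained within $\Z[x]$ and does not need the reduction lemma.
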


\begin{proof} Write 
    $f(x)=g^2(x)\cdot q(x)+p\cdot r(x)$
    for some $q(x),r(x)$ belonging to $\Z[x]$. Let $r(x)=g(x)\cdot s(x)+t(x)$, where $\deg t(x)<\deg g(x)$. Then
    \begin{equation}\label{fgp}
        f(x)=g^2(x)\cdot q(x)+p\cdot g(x)\cdot s(x)+p\cdot t(x).
    \end{equation}
    Since $g^2(x)\in \langle p, g(x) \rangle^2$ and $p\cdot g(x)\in \langle p, g(x) \rangle^2$, we get that
    \begin{equation}\label{eqq1}
    f(x)\in\langle p, g(x) \rangle^2 \iff p\cdot t(x)\in\langle p, g(x) \rangle^2.
    \end{equation}
    Substituting $x^p$ in place of $x$ in \eqref{fgp},
    \begin{equation}\label{eqq2}f(x^p)=g^2(x^p)\cdot q(x^p)+p\cdot g(x^p)\cdot s(x^p)+p\cdot t(x^p).\end{equation}
    Using the fact, that $p\mid (g(x^p)-g(x)^p)$ we get $g(x^p)\in\langle p, g(x) \rangle$. This implies that both $g^2(x^p)$ and $p\cdot g(x^p)$ belong to the ideal $\langle p, g(x) \rangle^2$. Thus, $f(x^p)\in\langle p, g(x) \rangle^2$ if and only if $p\cdot t(x^p)\in \langle p, g(x) \rangle^2$. Since $p\mid (t(x)^p-t(x^p))$ for any polynomial $t(x)\in\Z[x]$, then \begin{equation}\label{eqq3}p\cdot t(x^p)\in \langle p, g(x) \rangle^2 \iff p\cdot t(x)^p\in \langle p, g(x) \rangle^2.\end{equation} 
    Thus, from \eqref{eqq2} and \eqref{eqq3},
    \begin{equation}\label{eqq4} f(x^p)\in\langle p, g(x) \rangle^2 ~ \iff p\cdot t(x)^p\in \langle p, g(x) \rangle^2.\end{equation}
    In view of \eqref{eqq1} and \eqref{eqq4}, to prove the statement it is enough to  show that 
    $$p\cdot t(x)\in \langle p, g(x) \rangle^2\text{ if and only if }p\cdot t(x)^p\in \langle p, g(x) \rangle^2.$$ 
    Clearly, $p\cdot t(x)\in \langle p, g(x) \rangle^2$ implies that $p\cdot t(x)^p\in \langle p, g(x) \rangle^2$. Assuming that $p\cdot t(x)^p\in \langle p, g(x) \rangle^2$, we can write
    $$p\cdot t(x)^p=p^2\cdot a(x)+p\cdot g(x)\cdot b(x)+g^2(x)\cdot c(x),$$
    for some $a(x),b(x),c(x)\in\Z[x].$
    Therefore, $g^2(x)\cdot c(x)$ belongs to the prime ideal $\langle p\rangle$ of $\Z[x]$. Since $g(x)$ is monic and irreducible modulo $p$, then $c(x)\in \langle p\rangle$ and $t(x)^p\in \langle p, g(x) \rangle$. Equivalently, $t(x)\in \langle p, g(x) \rangle$ and therefore $p\cdot t(x)\in \langle p, g(x) \rangle^2$. This completes the proof.
\end{proof}

\begin{corollary}\label{C1}
	Let $u$ be a positive integer. For a monic polynomial $f(x)\in\Z[x]$, a prime $p$ divides the index of $f(x^p)$ if and only if $p$ divides the index of $f(x^{p^u})$, provided that $f(x^{p^u})$ is an irreducible polynomial.
\end{corollary}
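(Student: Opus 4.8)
The plan is to deduce the corollary from Theorem 2.B together with Theorem \ref{T1}, by isolating a single ``one-step'' statement comparing the exponent $p^{j}$ with $p^{j+1}$. Write $F_j(x):=f(x^{p^j})$ for $j\ge 0$, so that $F_0=f$ and, crucially, $F_{j+1}(x)=F_j(x^p)$. Since $f(x^{p^u})=F_u$ is irreducible and each $F_j$ is monic, applying the quoted fact that irreducibility of $h(x^k)$ implies irreducibility of $h(x)$, with $h=F_j$ and $k=p^{u-j}$ (note $F_j(x^{p^{u-j}})=F_u(x)$), shows that $F_1,\dots,F_u$ are all irreducible, so each has a well-defined index. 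The case $u=1$ is trivial, so I assume $u\ge 2$ and reduce to proving, for every $j$ with $1\le j\le u-1$, that $p$ divides the index of $F_j$ if and only if $p$ divides the index of $F_{j+1}$; chaining these equivalences from $j=1$ to $j=u-1$ then yields the corollary.

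For the one-step statement, fix $j$ with $1\le j\le u-1$. By Theorem 2.B, $p$ divides the index of $F_j$ (respectively of $F_{j+1}$) exactly when $F_j\in\langle p,g(x)\rangle^2$ (respectively $F_{j+1}\in\langle p,g(x)\rangle^2$) for some monic $g(x)\in\Z[x]$ irreducible modulo $p$, so it suffices to check, for each such $g$, that $F_j\in\langle p,g(x)\rangle^2$ if and only if $F_{j+1}\in\langle p,g(x)\rangle^2$. The key computation is the Frobenius identity in $\F_p[x]$: reduction modulo $p$ gives $\overline{F_j}=\overline{f}^{\,p^j}$ and $\overline{F_{j+1}}=\overline{F_j}^{\,p}$. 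Hence, since $\overline g$ is prime in $\F_p[x]$, one has $\overline g\mid\overline{F_j}\Leftrightarrow\overline g\mid\overline f\Leftrightarrow\overline g\mid\overline{F_{j+1}}$; moreover, whenever $\overline g\mid\overline f$ it follows that $\overline g^{\,p^j}\mid\overline{F_j}$, and in particular $\overline g^{\,2}\mid\overline{F_j}$ because $p^j\ge 2$, i.e.\ $F_j\in\langle p,g^2(x)\rangle$. This is precisely the hypothesis required to apply Theorem \ref{T1}, which then gives $F_j\in\langle p,g(x)\rangle^2\Leftrightarrow F_j(x^p)\in\langle p,g(x)\rangle^2$, the desired equivalence since $F_j(x^p)=F_{j+1}$. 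In the complementary case $\overline g\nmid\overline f$, neither $\overline{F_j}$ nor $\overline{F_{j+1}}$ is divisible by $\overline g$, so neither $F_j$ nor $F_{j+1}$ lies in $\langle p,g^2(x)\rangle$, hence (by the inclusion $\langle p,g(x)\rangle^2\subseteq\langle p,g^2(x)\rangle$) neither lies in $\langle p,g(x)\rangle^2$, and both sides of the equivalence are false.

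The only genuine subtlety I anticipate is the role of the hypothesis $j\ge 1$: it is exactly what forces $p^j\ge 2$, so that every irreducible factor of $\overline{F_j}$ is automatically a \emph{multiple} factor and the hypothesis $F_j\in\langle p,g^2(x)\rangle$ of Theorem \ref{T1} holds for free. For $j=0$ this breaks down — a simple factor of $\overline f$ can become a double factor of $\overline{f(x^p)}$ and thereby introduce a new divisibility of the index by $p$ — which is precisely why the corollary compares $f(x^p)$, not $f(x)$, with $f(x^{p^u})$. Beyond this, the proof is routine bookkeeping: confirming that irreducibility descends along $F_u\to F_{u-1}\to\cdots\to F_1$ so that all the indices in question are defined, and translating back and forth between ``$p$ divides the index'' and ``membership in some $\langle p,g(x)\rangle^2$'' via Theorem 2.B.
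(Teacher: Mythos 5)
Your proof is correct and follows essentially the same route as the paper: both reduce the statement to Theorem 2.B plus Theorem \ref{T1}, using the Frobenius congruence $f(x^{p^j})\equiv f(x)^{p^j}\pmod p$ to see that every relevant irreducible factor $g$ is automatically a multiple factor (so the hypothesis $f(x^{p^j})\in\langle p,g^2(x)\rangle$ of Theorem \ref{T1} holds for free when $j\ge 1$), and then chaining the one-step equivalences by induction on $u$. Your extra care about the case $\overline g\nmid\overline f$ and the irreducibility of the intermediate polynomials only makes explicit what the paper leaves implicit.
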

\begin{proof}
    Let $g(x)\in\Z[x]$ be a monic polynomial such that $\overline{g}(x)$ is an irreducible factor of $f(x)$ modulo $p$. As $f(x^p)\equiv f(x)^p\mod p$, $g(x)^p$ is a factor of $f(x^p)$ modulo $p$, i.e., $f(x^p)\in\langle p, g^2(x) \rangle$. By Theorem \ref{T1}, 
    $$f(x^p)\in\langle p, g(x) \rangle^2 \iff  f(x^{p^2})\in\langle p, g(x) \rangle^2.$$
    Applying induction on $u$,  we get
    $$f(x^p)\in\langle p, g(x) \rangle^2 \iff f(x^{p^u})\in\langle p, g(x) \rangle^2.$$
    Keeping Theorem 2.B in mind, $p$ divides the index of $f(x^p)$ if and only if $p$ divides the index of $f(x^{p^u})$.
\end{proof}
\begin{lemma}\label{L1}
    Let $f(x)\in\Z[x]$ be a monic polynomial and $\ell$ a positive integer such that $f(x^\ell)$ is irreducible. If a prime $p$ divides the index of $f(x)$, then $p$ divides the index of $f(x^\ell)$. 
\end{lemma}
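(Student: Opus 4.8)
Since $f(x^\ell)$ is irreducible over $\Q$, so is $f(x)$ (as noted in the introduction), and both are monic; hence the two indices appearing in the statement are well defined and Uchida's Theorem~2.B applies to each of $f(x)$ and $f(x^\ell)$. The plan is to convert a maximal ideal of $\Z[x]$ that witnesses divisibility of the index of $f(x)$ by $p$ into one that witnesses divisibility of the index of $f(x^\ell)$ by $p$, using the substitution $x\mapsto x^\ell$.

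First I would apply the ``in particular'' part of Theorem~2.B: since $p$ divides the index of $f(x)$, there is a monic $g(x)\in\Z[x]$, irreducible modulo $p$, with $f(x)\in\langle p,g(x)\rangle^2=\langle p^2,\,p\,g(x),\,g^2(x)\rangle$. Thus $f(x)=p^2a(x)+p\,g(x)\,b(x)+g^2(x)\,c(x)$ for some $a,b,c\in\Z[x]$, and replacing $x$ by $x^\ell$ gives $f(x^\ell)=p^2a(x^\ell)+p\,g(x^\ell)\,b(x^\ell)+g^2(x^\ell)\,c(x^\ell)$.

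Next I would pick the new irreducible factor. The polynomial $g(x^\ell)$ is monic of degree $\ell\deg g\ge 1$, so $\overline{g}(x^\ell)$ has a monic irreducible factor in $\mathbb{F}_p[x]$; lifting it, choose a monic $h(x)\in\Z[x]$ that is irreducible modulo $p$ and divides $g(x^\ell)$ modulo $p$, so that $g(x^\ell)\in\langle p,h(x)\rangle$. Then $g^2(x^\ell)=\big(g(x^\ell)\big)^2\in\langle p,h\rangle^2$; moreover $p\cdot\langle p,h\rangle=\langle p^2,\,p\,h\rangle\subseteq\langle p,h\rangle^2$, so $p\,g(x^\ell)\,b(x^\ell)\in\langle p,h\rangle^2$; and $p^2a(x^\ell)\in\langle p^2\rangle\subseteq\langle p,h\rangle^2$. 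Summing these, $f(x^\ell)\in\langle p,h(x)\rangle^2$ with $h$ monic irreducible modulo $p$, and Theorem~2.B then yields that $p$ divides the index of $f(x^\ell)$.

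I do not expect a genuine obstacle here; the only point requiring care is the routine verification that each of the three summands of $f(x^\ell)$ lies in $\langle p,h\rangle^2$, which uses only the defining property $g(x^\ell)\in\langle p,h\rangle$ of $h$ together with the elementary inclusion $p\cdot\langle p,h\rangle\subseteq\langle p,h\rangle^2$. For the record, there is also a purely arithmetic argument that bypasses Uchida: with $\beta$ a root of $f(x^\ell)$, $\alpha=\beta^\ell$ the corresponding root of $f$, $K=\Q(\alpha)$ and $L=\Q(\beta)$, one has $\Z[\beta]=\bigoplus_{j=0}^{\ell-1}\Z[\alpha]\beta^j$ and $\Z_K[\beta]=\bigoplus_{j=0}^{\ell-1}\Z_K\beta^j$ (the latter because $x^\ell-\alpha$ is monic and is the minimal polynomial of $\beta$ over $K$), whence $[\Z_K[\beta]:\Z[\beta]]=[\Z_K:\Z[\alpha]]^{\ell}$ and $[\Z_L:\Z[\beta]]=[\Z_L:\Z_K[\beta]]\cdot[\Z_K:\Z[\alpha]]^{\ell}$, which is divisible by every prime dividing $[\Z_K:\Z[\alpha]]$.
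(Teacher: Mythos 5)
Your main argument is correct and is essentially the paper's own proof: both invoke Uchida's Theorem~2.B, transport the membership $f(x)\in\langle p,g(x)\rangle^2$ through the substitution $x\mapsto x^\ell$, and replace $g(x^\ell)$ by a monic irreducible factor $h(x)$ of it modulo $p$ to conclude $f(x^\ell)\in\langle p,h(x)\rangle^2$, hence $p$ divides the index of $f(x^\ell)$. Your closing aside via the index chain $[\Z_L:\Z[\beta]]=[\Z_L:\Z_K[\beta]]\cdot[\Z_K:\Z[\alpha]]^{\ell}$ is also a valid (and Uchida-free) alternative, but the core proposal matches the paper's route.
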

\begin{proof}
    If $p$ divides the index of $f(x)$, then by Theorem 2.B, there exists a monic polynomial $g(x)\in\Z[x]$, such that $g(x)$ is irreducible modulo $p$ and $f(x)\in \langle p, g(x) \rangle^2$. Therefore, $f(x^\ell)\in \langle p, g(x^\ell) \rangle^2$. Let $h(x)$ be an irreducible factor of $g(x^\ell)$ modulo $p$. Then $g(x^\ell)\in \langle p,h(x)\rangle$, so $\langle p, g(x^\ell) \rangle\subset \langle p,h(x)\rangle$. This implies, that $f(x^\ell)$ belongs to the square of the maximal ideal $\langle p, h(x) \rangle$. Hence, by Theorem 2.B, $p$ divides the index of $f(x^\ell)$.
\end{proof}
To investigate all of the possible primes that may divide the index of $f(x^\ell)$, we need to find its discriminant. We shall use the following lemma, which describes the properties of the resultant of two polynomials.
\begin{lemma}\label{lemma a} Let $f(x)=a_mx^m+a_{m-1}x^{m-1}+\cdots+a_0$ and $g(x)=b_nx^n+b_{n-1}x^{n-1}+\cdots+b_0$  be two polynomials with integer coefficients of degrees $m$ and  $n$, and let $\lambda_1,\cdots,\lambda_m$ and $\mu_1\cdots,\mu_n$ be the roots of $f(x)$ and $g(x)$,  respectively. Then
\begin{enumerate}
    \item $R(f,g)=a_m^nb_n^m\displaystyle\prod_{\substack{1\le i\le m\\ 1\le j \le n}}(\lambda_i-\mu_j)$,
	\item $R(f,g)=a_m^n\prod\limits_{i=1}^m g(\lambda_i)=(-1)^{mn}b_n^m\prod\limits_{j=1}^n f(\mu_j)$,
\item $R(f,gh)=R(f,g)R(f,h)$, for any $h(x)\in\Z[x]$,
\item $R(f,ag)=a^mR(f,g)$, for any $a\in\Z$.
\end{enumerate}
\end{lemma}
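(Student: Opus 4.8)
The plan is to treat part (2) as the substantive step and to obtain (1), (3), (4) from it by short formal manipulations. Write $R(f,g)$ for the determinant of the $(m+n)\times(m+n)$ Sylvester matrix of $f$ and $g$; recall that this determinant is homogeneous of degree $n$ in the coefficients $a_0,\dots,a_m$ of $f$ and of degree $m$ in the coefficients $b_0,\dots,b_n$ of $g$.

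For (2) I would first reduce to the monic case: replacing $f$ by $a_m^{-1}f$ divides each of the $n$ ``$f$-rows'' of the Sylvester matrix by $a_m$, so $R(f,g)=a_m^{\,n}R(a_m^{-1}f,g)$, and symmetrically in $g$. For monic $f=\prod_{i}(x-\lambda_i)$ and $g=\prod_{j}(x-\mu_j)$, regard the $\lambda_i,\mu_j$ as independent indeterminates. Viewed through the coefficients, $R(f,g)$ is then a polynomial in the $\lambda$'s and $\mu$'s that vanishes whenever $\lambda_i=\mu_j$ (the specialised $f,g$ acquire the common factor $x-\mu_j$, so their Sylvester matrix is singular); hence $\lambda_i-\mu_j$ divides $R(f,g)$ for all $i,j$, and as these linear forms are pairwise coprime, $\prod_{i,j}(\lambda_i-\mu_j)\mid R(f,g)$. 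A degree count — elementary symmetric functions are multilinear, so each ``$f$-row'' entry has degree $\le 1$ in a fixed $\lambda_i$, whence $\deg_{\lambda_i}R(f,g)\le n$, and likewise $\deg_{\mu_j}R(f,g)\le m$ — matches the degrees of $\prod_{i,j}(\lambda_i-\mu_j)$, so the two agree up to a constant, which a single extremal monomial (e.g. specialising all $\mu_j\to 0$) pins to $1$. This gives $R(f,g)=\prod_{i,j}(\lambda_i-\mu_j)=\prod_i g(\lambda_i)$ in the monic case; undoing the normalisations yields $R(f,g)=a_m^{\,n}\prod_i g(\lambda_i)$ in general. Finally $R(f,g)=(-1)^{mn}R(g,f)$, since passing from the Sylvester matrix of $(f,g)$ to that of $(g,f)$ moves a block of $n$ rows past a block of $m$ rows; applying the formula just proved to $R(g,f)$ produces the second equality $R(f,g)=(-1)^{mn}b_n^{\,m}\prod_j f(\mu_j)$. (Alternatively, (2) may simply be quoted from any standard treatment of resultants.)

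Granting (2), the remaining parts are immediate. Part (1) is (2) with $g(\lambda_i)$ rewritten as $b_n\prod_j(\lambda_i-\mu_j)$. For (3): the roots $\lambda_i$ of $f$ are unaffected and $\deg(gh)=\deg g+\deg h$, so $R(f,gh)=a_m^{\deg g+\deg h}\prod_i g(\lambda_i)h(\lambda_i)=\bigl(a_m^{\deg g}\prod_i g(\lambda_i)\bigr)\bigl(a_m^{\deg h}\prod_i h(\lambda_i)\bigr)=R(f,g)R(f,h)$. Part (4) is the special case $h\equiv a$ of (3): since the constant polynomial $a$ has degree $0$ and evaluates to $a$ at each $\lambda_i$, $R(f,a)=a_m^{\,0}\prod_{i=1}^m a=a^m$, hence $R(f,ag)=R(f,a)R(f,g)=a^mR(f,g)$.

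The only genuine obstacle is the passage in (2) from the Sylvester-determinant definition of $R$ to the product-over-roots formula; the indeterminate-and-specialisation argument sketched above settles it, and if one is content to cite a textbook it evaporates. The sole delicate point is bookkeeping the sign $(-1)^{mn}$ and the normalising constant, which is easily checked on the base case $f=x-a,\ g=x-b$.
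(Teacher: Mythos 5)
Your proof is correct; the only thing to note on the comparison side is that the paper does not prove this lemma at all -- it states these resultant identities as standard facts and immediately moves on to the discriminant formula \eqref{beta} and Lemma \ref{dfpow} -- so there is no in-paper argument to diverge from. What you give is the classical textbook derivation: reduce (2) to the monic case by factoring $a_m$ out of the $n$ ``$f$-rows'' of the Sylvester matrix, treat the roots as indeterminates, observe that each $\lambda_i-\mu_j$ divides the resultant, match degrees, and fix the constant; then (1), (3), (4) fall out formally. Two small points of care, which you yourself flag: the normalising constant and the sign $(-1)^{mn}$ should be pinned down by an actual computation valid for all $m,n$ (e.g. the specialisation $\mu_j\to 0$ does work, since the Sylvester determinant of a monic $f$ and $x^n$ evaluates to $(-1)^{mn}a_0^n=\prod_i\lambda_i^n$, but checking only the base case $f=x-a$, $g=x-b$ does not by itself settle general $m,n$); and your derivation of (4) from (3) with $h\equiv a$ tacitly uses the convention $R(f,a)=a^m$ for a degree-zero second argument -- this is consistent with the Sylvester definition, though (4) is even more immediate by noting that multiplying $g$ by $a$ scales each of the $m$ ``$g$-rows'' of the Sylvester matrix by $a$. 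Alternatively, as you say, one may simply cite a standard reference, which is in effect what the paper does.
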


Note, that the discriminant of a monic polynomial $f(x)\in\Z[x]$ of degree $n$ and the resultant $R(f,f')$ are related by the following formula.
\begin{equation}\label{beta}
	D(f)=(-1)^{\frac{n(n-1)}{2}}R(f,f'),
\end{equation} where $f'$ is the derivative of $f.$ 
\begin{lemma}\label{dfpow}
	Let $f(x)\in\Z[x]$ be a monic polynomial of degree $d$, and $\ell$ a positive integer. Then the discriminant of $f(x^\ell)$ is
	$$D(f(x^\ell))=\pm D(f(x))^\ell\cdot \ell^{d\ell}\cdot f(0)^{\ell-1}.$$
\end{lemma}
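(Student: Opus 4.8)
The plan is to apply the resultant identity \eqref{beta} to $F(x):=f(x^{\ell})$, which is monic of degree $n:=d\ell$, and then peel $R(F,F')$ apart using the multiplicativity rules of Lemma \ref{lemma a}. Since $F'(x)=\ell\,x^{\ell-1}f'(x^{\ell})$, parts (3) and (4) of Lemma \ref{lemma a} give
\[
R(F,F')=R\!\left(F,\ \ell\,x^{\ell-1}f'(x^{\ell})\right)=\ell^{\,n}\,R(F,x)^{\ell-1}\,R\!\left(F,\ f'(x^{\ell})\right).
\]
Here the power $\ell^{\,n}=\ell^{\,d\ell}$ already matches the target formula, and by Lemma \ref{lemma a}(2) one has $R(F,x)=\pm F(0)=\pm f(0)$, which accounts for the factor $f(0)^{\ell-1}$.

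It remains to identify $R\!\left(F,f'(x^{\ell})\right)$, and for this I would prove the general identity $R\!\left(f(x^{\ell}),g(x^{\ell})\right)=R(f,g)^{\ell}$ for any $g\in\Z[x]$ (using only that $f$ is monic) and then specialize to $g=f'$. Writing $\lambda_{1},\dots,\lambda_{d}$ for the roots of $f$ with multiplicity and $\rho_{1},\dots,\rho_{d\ell}$ for the roots of $f(x^{\ell})$ with multiplicity, the factorization $f(x^{\ell})=\prod_{i}(x^{\ell}-\lambda_{i})$ shows that the multiset $\{\rho_{j}^{\ell}\}_{j}$ is exactly $\{\lambda_{i}\}_{i}$ with every element repeated $\ell$ times. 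Hence, by Lemma \ref{lemma a}(2) and monicity,
\[
R\!\left(f(x^{\ell}),g(x^{\ell})\right)=\prod_{j=1}^{d\ell}g\!\left(\rho_{j}^{\ell}\right)=\Biggl(\prod_{i=1}^{d}g(\lambda_{i})\Biggr)^{\!\ell}=R(f,g)^{\ell}.
\]
Taking $g=f'$ and applying \eqref{beta} once more (so that $R(f,f')=\pm D(f)$) yields $R\!\left(F,f'(x^{\ell})\right)=\pm D(f)^{\ell}$.

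Putting the pieces together, $D(F)=\pm R(F,F')=\pm\,\ell^{\,d\ell}\,f(0)^{\ell-1}\,D(f)^{\ell}$, which is the assertion; all the explicit sign factors coming from \eqref{beta} and from the root formula in Lemma \ref{lemma a}(2) are absorbed into the ambiguous $\pm$. I expect the one delicate point to be the multiplicity bookkeeping in the identity $R(f(x^{\ell}),g(x^{\ell}))=R(f,g)^{\ell}$ — in particular that it remains valid when $f$ has repeated roots or vanishes at $0$ — but this is handled cleanly by using the product-over-roots formula of Lemma \ref{lemma a}(2) directly rather than invoking any separability hypothesis. A quick sanity check ($f(x)=x^{2}+bx+c$, $\ell=2$, for which the formula predicts $D(x^{4}+bx^{2}+c)=\pm 16\,c\,(b^{2}-4c)^{2}$) confirms the exponents.
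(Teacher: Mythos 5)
Your proof is correct and follows essentially the same route as the paper: apply \eqref{beta}, split the resultant via Lemma \ref{lemma a}(3)--(4) to extract the factors $\ell^{d\ell}$ and $f(0)^{\ell-1}$, and identify $R(f(x^\ell),f'(x^\ell))=R(f,f')^{\ell}=\pm D(f)^{\ell}$ by observing that the $\ell$-th powers of the roots of $f(x^\ell)$ are the roots of $f$ each repeated $\ell$ times. Packaging that last step as the general identity $R(f(x^\ell),g(x^\ell))=R(f,g)^{\ell}$ is a tidy but inessential variation on the paper's direct computation with $g=f'$.
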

\begin{proof} Using \eqref{beta}, we get
		$$D(f(x^\ell))=(-1)^{\frac{d\ell(d\ell-1)}{2}}\cdot R(f(x^\ell),\ell x^{\ell-1}f'(x^\ell)).$$
		Applying (3) and (4) of Lemma \ref{lemma a}, 
        $$D(f(x^\ell))=(-1)^{\frac{d\ell(d\ell-1)}{2}}\cdot \ell^{d\ell}\cdot R(f(x^\ell),x^{\ell-1})\cdot R(f(x^\ell),f'(x^\ell)).$$ 
        Using (2) of Lemma \ref{lemma a}, 
		\begin{align*}D(f(x^\ell))=&(-1)^{\frac{d\ell(d\ell-1)}{2}+d\ell+\ell-1}\cdot \ell^{d\ell}\cdot\prod_{i=1}^{\ell-1}f(0)\cdot R(f(x^\ell),f'(x^\ell)),\\
		=&(-1)^{\frac{d\ell(d\ell+1)}{2}+\ell-1}\cdot  \ell^{d\ell}\cdot f(0)^{\ell-1}\cdot R(f(x^\ell),f'(x^\ell)),\\
		=&(-1)^{\frac{d\ell(d\ell+1)}{2}+\ell-1}\cdot  \ell^{d\ell}\cdot f(0)^{\ell-1}\cdot \prod_{i=1}^{d\ell}f'(\lambda_i^\ell),\end{align*}
        where $f(\lambda_i^\ell)=0$, for $1\le i\le d\ell.$	 Note, that if $\mu_1,\cdots, \mu_d$ are roots of $f(x),$ then we can assume that $\lambda_{i+j-1}^\ell=\mu_j$ for $1\le i\le \ell$ and $1\le j\le d.$ Therefore, (1) and (2) of Lemma \ref{lemma a} imply that 
        \begin{align*}
		D(f(x^\ell))=&(-1)^{\frac{d\ell(d\ell+1)}{2}+\ell-1}\cdot  \ell^{d\ell}\cdot f(0)^{\ell-1}\cdot\left(\prod_{i=1}^{d}f'(\mu_i)\right)^\ell.
        \end{align*}
        Hence, the result follows from (2) of Lemma \ref{lemma a}.
\end{proof}
The following lemma is proved in \cite[Proposition 4.15]{NW}, and it suggests that if $f(x)$ is monogenic, then the discriminant of the field generated by a root of $f(x^\ell)$ cannot be too small.
\begin{lemma}\label{abcd}
	Let $K$ and $L$ be number fields with $K\subset L$. Then $$\Delta(K)^{[L:K]}\mid \Delta(L).$$
\end{lemma}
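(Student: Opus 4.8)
The plan is to reduce the stated divisibility to the classical tower (conductor--discriminant) formula, which relates the absolute discriminants of $K$ and $L$ to the relative discriminant of $L/K$. Write $n=[L:K]$. For a number field $F$, let $\mathfrak{D}_{F/\Q}$ denote the different ideal of $\Z_F$ and $\mathfrak{d}_{F/\Q}=N_{F/\Q}(\mathfrak{D}_{F/\Q})$ the associated discriminant ideal, so that $\mathfrak{d}_{F/\Q}=(|\Delta(F)|)$ as an ideal of $\Z$. Similarly, let $\mathfrak{D}_{L/K}$ be the relative different of $\Z_L$ over $\Z_K$ and $\mathfrak{d}_{L/K}=N_{L/K}(\mathfrak{D}_{L/K})$ the relative discriminant, an ideal of $\Z_K$.

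First I would invoke the multiplicativity of the different in the tower $\Q\subseteq K\subseteq L$, namely
\[
\mathfrak{D}_{L/\Q}=\mathfrak{D}_{L/K}\cdot(\mathfrak{D}_{K/\Q}\,\Z_L).
\]
Applying the norm $N_{L/\Q}=N_{K/\Q}\circ N_{L/K}$ and using its multiplicativity, the first factor yields $N_{L/\Q}(\mathfrak{D}_{L/K})=N_{K/\Q}(\mathfrak{d}_{L/K})$, while the extended ideal contributes $N_{L/\Q}(\mathfrak{D}_{K/\Q}\,\Z_L)=N_{K/\Q}(\mathfrak{D}_{K/\Q})^{\,n}=\mathfrak{d}_{K/\Q}^{\,n}$, the last identity being the standard rule $N_{L/\Q}(\mathfrak a\,\Z_L)=N_{K/\Q}(\mathfrak a)^{\,n}$ for an ideal $\mathfrak a$ of $\Z_K$. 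Passing to positive generators, this gives
\[
|\Delta(L)|=N_{K/\Q}(\mathfrak{d}_{L/K})\cdot|\Delta(K)|^{\,n}.
\]
Since $N_{K/\Q}(\mathfrak{d}_{L/K})$ is a positive integer, this equality immediately yields $\Delta(K)^{[L:K]}\mid\Delta(L)$, as claimed (the signs are irrelevant for divisibility in $\Z$).

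The only point requiring care is the transitivity of the different together with the norm identity for extended ideals; both ultimately rest on the fact that $\Z_L$ is a finitely generated torsion-free, hence locally free, module of rank $n$ over the Dedekind domain $\Z_K$. An alternative self-contained route, in the spirit of \cite{NW}, avoids the different entirely: localize at a rational prime $p$, so that $\Z_K\otimes\Z_{(p)}$ is a semilocal Dedekind domain, hence a principal ideal domain, over which $\Z_L\otimes\Z_{(p)}$ is free of rank $n$; one then applies the purely algebraic transitivity formula $\operatorname{disc}(T/R)=N_{S/R}\!\big(\operatorname{disc}(T/S)\big)\cdot\operatorname{disc}(S/R)^{\,n}$ for a tower $R\subseteq S\subseteq T$ of free extensions and compares $p$-adic valuations, the extra local factor being a nonnegative power of $p$. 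Either way, I expect the main obstacle to be establishing the tower formula itself; once it is in hand, the divisibility is a one-line consequence of the nonnegativity of the relative-discriminant contribution.
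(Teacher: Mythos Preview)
Your argument is correct and is essentially the standard proof via the tower formula for discriminants (multiplicativity of the different and norms), which is exactly how the cited reference \cite[Proposition~4.15]{NW} establishes it. The paper itself does not supply a proof of this lemma; it merely quotes it from Narkiewicz, so there is nothing further to compare.
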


The next proposition is essential for the proof of our main theorems. It completely describes the possible prime divisors of the index of $f(x^\ell)$.

\begin{proposition}\label{C2}
    Let $f(x)\in\Z[x]$ be a monic  polynomial of degree $d$, $p$ a prime and $\ell$ an integer greater than $1$. If  $p\nmid \ell$ and $f(x^\ell)$ is irreducible, then $p$ divides the index of $f(x^\ell)$ if and only if either 
    \begin{enumerate}
        \item $p$ divides the index of $f(x)$, or
        \item $p^2\mid f(0)$.
    \end{enumerate}
\end{proposition}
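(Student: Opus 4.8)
The plan is to treat the two implications separately, using the Uchida criterion (Theorem~2.B), Corollary~\ref{resqu}, and the elementary factorisation of $\bar f(x^\ell)$ over $\F_p$; the hypothesis $p\nmid\ell$ will be needed only for a separability statement. The ``if'' direction is immediate and does not even use $p\nmid\ell$: if $p$ divides the index of $f(x)$, then Lemma~\ref{L1} applied with exponent $\ell$ shows that $p$ divides the index of $f(x^\ell)$; if $p^2\mid f(0)$, then Proposition~\ref{re} (valid since $\ell>1$) gives the same conclusion.

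For the converse, suppose $p$ divides the index of $f(x^\ell)$. By Theorem~2.B there is a monic $h(x)\in\Z[x]$, irreducible modulo $p$, with $f(x^\ell)\in\langle p,h(x)\rangle^2$, so in particular $\bar h^2\mid\bar f(x^\ell)$. I would then describe the repeated factors of $\bar f(x^\ell)$: writing $\bar f=\prod_i\bar g_i^{\,e_i}$ with distinct monic irreducible $\bar g_i$, we have $\bar f(x^\ell)=\prod_i\bar g_i(x^\ell)^{e_i}$; since $p\nmid\ell$, for each $\bar g_i\neq x$ the polynomial $\bar g_i(x^\ell)$ is separable (its roots are the $\ell$-th roots of the nonzero roots of $\bar g_i$, and $x^\ell-\alpha$ is separable for $\alpha\neq0$), and the polynomials $x$ and the $\bar g_i(x^\ell)$ with $\bar g_i\neq x$ are pairwise coprime (a shared root would produce a common root of two distinct $\bar g_i$). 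Hence every repeated irreducible factor of $\bar f(x^\ell)$ is either $x$ (which occurs exactly when $p\mid f(0)$) or divides $\bar g(x^\ell)$ for some repeated irreducible factor $\bar g\neq x$ of $\bar f$. If $\bar h=x$, then $p\mid f(0)$ and $f(x^\ell)\in\langle p,x^2\rangle$; the remainder of $f(x^\ell)$ on division by $x$ is $f(0)$, so Corollary~\ref{resqu} gives $f(x^\ell)\in\langle p,x\rangle^2\iff p^2\mid f(0)$, and (2) follows.

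It remains to handle the case $\bar h\mid\bar g(x^\ell)$ for a repeated irreducible factor $\bar g\neq x$ of $\bar f$; I claim this forces (1). Choose a monic lift $g(x)$ of $\bar g$, so $f(x)\in\langle p,g^2(x)\rangle$, and write $f(x)=g(x)A(x)+p\,t(x)$ with $\deg t<\deg g$; then $\bar g^2\mid\bar f$ forces $\bar g\mid\bar A$, while $\bar h\mid\bar g(x^\ell)$ lets us write $g(x^\ell)=h(x)B(x)+p\,C(x)$. Substituting $x^\ell$ for $x$ in the expression for $f$ gives
\[
f(x^\ell)=h(x)\,B(x)A(x^\ell)+p\bigl(C(x)A(x^\ell)+t(x^\ell)\bigr).
\]
From $f(x^\ell)\in\langle p,h(x)\rangle^2$ and $\bar h^2\mid\bar f(x^\ell)$, Corollary~\ref{resqu} gives $f(x^\ell)\in\langle p^2,h(x)\rangle$, i.e.\ $\bar h\mid\overline{C(x)A(x^\ell)+t(x^\ell)}$; since $\bar g\mid\bar A$ implies $\bar h\mid\bar g(x^\ell)\mid\bar A(x^\ell)$, we get $\bar h\mid\bar t(x^\ell)$. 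Thus any root $\zeta$ of $\bar h$ satisfies $\bar t(\zeta^\ell)=0$ together with $\bar g(\zeta^\ell)=0$; as $\bar g$ is the minimal polynomial of $\zeta^\ell$ over $\F_p$ and $\deg\bar t<\deg\bar g$, this forces $\bar t=0$, that is, $f(x)\in\langle p^2,g(x)\rangle$. Combined with $f(x)\in\langle p,g^2(x)\rangle$, Corollary~\ref{resqu} gives $f(x)\in\langle p,g(x)\rangle^2$, so $p$ divides the index of $f(x)$ by Theorem~2.B, proving (1).

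The step I expect to be the main obstacle is the bookkeeping in the last case: propagating the ideal membership $f(x^\ell)\in\langle p,h\rangle^2$ through the substitution $x\mapsto x^\ell$ and then isolating the remainder polynomial $\bar t$ and concluding that it vanishes. Getting the factorisation structure of $\bar f(x^\ell)$ right — the one place where $p\nmid\ell$ is genuinely used — also needs care, especially in keeping the factor $x$ apart from the rest. (One could try to route the ``only if'' direction through the discriminant identity of Lemma~\ref{dfpow}, the relation~\eqref{eq} and Lemma~\ref{abcd}, but this seems to yield only $p\mid f(0)$ rather than $p^2\mid f(0)$ in case~(2), so the ideal-theoretic argument above still appears to be needed.)
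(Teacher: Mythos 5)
Your argument is correct, and it is genuinely a different route in one important respect. You prove the ``only if'' implication directly, starting from a single maximal ideal $\langle p,h(x)\rangle$ witnessing $p\mid[\Z_L:\Z[\beta]]$, splitting according to whether $\bar h=x$ (where Corollary~\ref{resqu} immediately yields $p^2\mid f(0)$) or $\bar h\mid\bar g(x^\ell)$ for a repeated factor $\bar g\neq x$ of $\bar f$ (where you push the remainder $p\,t(x)$ of $f$ modulo $g$ through $x\mapsto x^\ell$ and force $\bar t=0$ by the minimal-polynomial/degree argument, so that Corollary~\ref{resqu} gives $f\in\langle p,g\rangle^2$). The paper instead proves the contrapositive and splits on $p\mid f(0)$: the case $p\nmid f(0)$ is disposed of globally, via the discriminant formula of Lemma~\ref{dfpow}, the index--discriminant relation~\eqref{eq} and the tower divisibility $\Delta(K)^{[L:K]}\mid\Delta(L)$ of Lemma~\ref{abcd}, and only the case $p\mid f(0)$, $p^2\nmid f(0)$ is handled ideal-theoretically, by producing an inverse $t$ with $s\cdot t\in 1+\langle p,g\rangle$ and transporting it to $s(x^\ell)t(x^\ell)\in 1+\langle p,h(x)\rangle$ --- which is exactly the contrapositive form of your ``$\bar t(\zeta^\ell)=0$ with $\deg\bar t<\deg\bar g$ forces $\bar t=0$'' step. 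So the core local mechanism in the hard case is the same, but your treatment is uniform and purely local: it never needs discriminants or field-discriminant divisibility, and it also makes transparent exactly where $p\nmid\ell$ enters (separability of $\bar g_i(x^\ell)$ for $\bar g_i\neq x$, hence the identification of the repeated factors of $\bar f(x^\ell)$). What the paper's discriminant detour buys is a very quick disposal of all primes $p\nmid \ell f(0)$ using machinery it needs anyway for Theorem~\ref{uvw}(1); what your version buys is a self-contained elementary proof and, as you note, the correct sharp conclusion $p^2\mid f(0)$ in case~(2), which the discriminant bound alone would not give. Two small points of hygiene: when applying Theorem~2.B to $f(x)$ at the end, note that $f(x)$ is irreducible because $f(x^\ell)$ is (the paper records this convention), and in the case $\bar h=x$ remember that the lift $h$ need only satisfy $\langle p,h\rangle=\langle p,x\rangle$, which is what you implicitly use; neither affects correctness.
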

\begin{proof} 
    By Lemma \ref{L1} and Proposition \ref{re}, both (1) and (2) imply that $p$ divides the index of $f(x^\ell)$. To prove the converse,
     we assume that $p$ does not divide the index of $f(x)$ and $p^2\nmid f(0)$. We split the proof into two cases.\\
    \textit{Case 1.} $p\nmid f(0)$. Equation \eqref{eq} shows, that 
    $$[\Z_K:\Z[\theta]]^2=\frac{D(f(x))}{\Delta(K)},$$
    where $\Delta(K)$ is the discriminant of the number field $K$ generated by a root $\theta$ of $f(x)$. Let $L$ be a number field generated by a root of $f(x^\ell)$. From Lemma \ref{dfpow},
    $$D(f(x^\ell))=\pm D(f(x))^\ell\cdot \ell^{d\ell}\cdot f(0)^{\ell-1}.$$ 
    Using Lemma \ref{abcd}, we get
    $$\frac{D(f(x^\ell))}{\Delta(L)}\mid \frac{D(f(x))^\ell}{\Delta(K)^\ell}\cdot \ell^{d\ell}\cdot f(0)^{\ell-1},$$ where $\Delta(L)$ is the discriminant of the number field $L$.
    Since $p\nmid \frac{D(f(x))}{\Delta(K)}$ and $p\nmid \ell\cdot f(0)$, then $p$ does not divide the index of $f(x^\ell)$.\\
    \textit{Case 2.} $p\mid f(0)$. In view of Theorem 2.B, we have to show that $f(x^\ell)$ does not belong to $\langle p,h(x)\rangle^2$, where $h(x)\in\Z[x]$ is monic and irreducible modulo $p$. By Lemma \ref{maxint},
    it is enough to show that $f(x^\ell)$ does not belong to either $\langle p,h^2(x)\rangle$ or $\langle p^2,h(x)\rangle$. 
   If $h(x)=x$, then $f(x^\ell)-f(0)\in \langle x\rangle\subset\langle p^2,x\rangle$. But $f(0)\not\in\langle p^2,x\rangle$, so $f(x^\ell)\not\in \langle p^2,x\rangle$. 
    First, we show that $f(x^\ell)$ has a multiple irreducible factor modulo $p$ different from $x$ if and only if $f(x)$ has a multiple irreducible factor modulo $p$ different from $x$.
    
    Let $g(x)$ be a monic irreducible polynomial modulo $p$, then $\gcd(g(x),g'(x))=1$. Since the derivative of $g(x^\ell)$ is $\ell x^{\ell-1}g'(x^\ell)$ and $p\nmid \ell$, $g(x^\ell)$ is separable if and only if $g(x)\neq x$. Thus, $\gcd(g(x^\ell),g'(x^\ell))=1$. We can conclude, that whenever a monic irreducible polynomial $g(x)\neq x$ is a single factor of $f(x)$ modulo $p$, then all irreducible factors of $g(x^\ell)$ are single factors of $f(x^\ell)$ modulo $p$. Furthermore, if $g_1(x)$ and $g_2(x)$ are distinct irreducible factors of $f(x)$ modulo $p$, then $\gcd(g_1(x),g_2(x))=1$ and so $\gcd(g_1(x^\ell),g_2(x^\ell))=1$. Thus, $h(x)\neq x$ is a multiple irreducible factor of $f(x^\ell)$ modulo $p$ if and only if $f(x)$ has a multiple irreducible factor $g(x)$ modulo $p$ such that $\overline{h}(x)\mid \overline{g}(x^\ell)$.

    Let $g(x)\in\Z[x]$ be a monic polynomial which is a multiple irreducible factor of $f(x)$ modulo $p$. Then $f(x)\in \langle p, g^2(x)\rangle$. By Corollary \ref{resqu} and Theorem 2.B, with the hypothesis that $p$ does not divide the index of $f(x)$, we get $f(x)\not\in \langle p^2, g(x)\rangle$. Let us write  $f(x)=g(x)\cdot c(x)+r(x)$, where $\deg r(x)<\deg g(x)$ and $c(x), r(x)\in\Z[x]$. 
    Clearly, $r(x)\in \langle p\rangle$ and $\overline{g}(x)\mid \overline{c}(x)$. Set $r(x)=p\cdot s(x)$, for some $s(x)\in\Z[x]$. Then, for certain polynomials $a(x),b(x)\in\Z[x]$, we can write
    $f(x)=g^2(x)\cdot a(x)+p\cdot g(x)\cdot b(x)+p\cdot s(x)$. Consequently,
    \begin{equation}\label{fgs}
        f(x^\ell)=g^2(x^\ell)\cdot a(x^\ell)+p\cdot g(x^\ell)\cdot b(x^\ell)+p\cdot s(x^\ell).
    \end{equation}
    As $g(x)$ is irreducible modulo $p$ and $\deg s(x)=\deg r(x)<\deg g(x)=\deg \overline{g}(x)$, then $\gcd(\overline{g}(x),\overline{s}(x))=1$. Thus, there exists a polynomial $t(x)\in\Z[x]$ such that $s(x)\cdot t(x)\in 1+\langle p,g(x)\rangle$ and so
    $s(x^\ell)\cdot t(x^\ell)\in 1+\langle p,g(x^\ell)\rangle.$
    Let $h(x)\in\Z[x]$ be a monic polynomial which is an irreducible factor of $g(x^\ell)$ modulo $p$, then $g(x^\ell)\in \langle p,h(x)\rangle$, and 
    \begin{equation}\label{sph}
        s(x^\ell)\cdot t(x^\ell)\in 1+\langle p,h(x)\rangle.
    \end{equation}
    Since $h(x)$ is a multiple factor of $f(x^\ell)$ modulo $p$, $f(x^\ell)\in\langle p,h^2(x)\rangle.$    
    We have to show that $f(x^\ell)\not\in\langle p^2,h(x)\rangle$. Since $g(x^\ell)\in \langle p,h(x)\rangle$, we can see that 
    $$g^2(x^\ell)\in \langle p,h(x)\rangle^2\subset\langle p^2,h(x)\rangle$$
    and 
    $$p\cdot g(x^\ell)\in \langle p^2,p\cdot h(x)\rangle\subset\langle p^2,h(x)\rangle.$$ Using Equation \eqref{fgs}, we get that $f(x^\ell)\in \langle p^2,h(x)\rangle$ if and only if $p\cdot s(x^\ell)\in\langle p^2,h(x)\rangle$. We can write $s(x^\ell)=h(x)\cdot u(x)+v(x),$ where $\deg v(x) <\deg h(x)$ and $u(x), v(x)\in\Z[x]$.
    Thus, $p\cdot s(x^\ell)\in \langle p^2,h(x)\rangle$ if and only if $p\cdot v(x)\in\langle p^2,h(x)\rangle$. As $\deg v(x)<\deg h(x)$, $p\cdot v(x)\in\langle p^2,h(x)\rangle$ if and only if $v(x)\in\langle p\rangle$. If $v(x)$ is a multiple of $p$, then $s(x^\ell)\in\langle p,h(x)\rangle$, which is a contradiction to Equation \eqref{sph}. Therefore, $f(x^\ell)\not\in \langle p^2,h(x)\rangle$. Hence, Theorem 2.B together with Lemma \ref{maxint} implies that $p$ does not divide the index of $f(x^\ell)$.
\end{proof}

\begin{remark}
    The main idea behind the last part of the proof is that if $g(x)$ is irreducible modulo $p$ and $\deg s(x)<\deg g(x)$, then $\overline{g}(x^\ell)$ and $\overline{s}(x^\ell)$ are coprime. But $f(x^\ell)$ belongs to $\langle p^2,h(x)\rangle$ if and only if $\overline{h}(x)$ is a common divisor of $\overline{g}(x^\ell)$ and $\overline{s}(x^\ell)$.
\end{remark}

\begin{remark}
    As it seems from the previous proof, the most challenging part is to show that if $p\mid f(0)$, but $p^2\nmid f(0)$ and $p\nmid k$, then $p$ does not divide the index of $f(x^k)$. This observation makes condition (3) in Theorem \ref{sufnec} to be sufficient together with the other conditions.
\end{remark}

 The following lemma proved in \cite[Lemma 2.17]{NW} will be used in the proof of the Theorem \ref{abc}.
\begin{lemma}\label{th2.1}
    Let $\alpha$ be an algebraic integer, and let $L=\Q(\alpha)$ be the field generated by it. If the minimal polynomial $f(x)$ of $\alpha$ over $\Q$ is an Eisenstein polynomial with respect to the prime $p$, i.e., it has the form $x^n+a_{n-1}x^{n-1}+\cdots+a_0,$ with $a_0,\cdots,a_{n-1}$ divisible by $p$ and $p^2\nmid a_0,$ then the index of $f(x)$ is not divisible by $p.$
\end{lemma}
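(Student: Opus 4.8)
The plan is to apply the index criterion recorded immediately after Theorem 2.B: a prime $p$ divides $[\Z_L:\Z[\alpha]]$ if and only if $f(x)\in\langle p,g(x)\rangle^2$ for some monic $g(x)\in\Z[x]$ that is irreducible modulo $p$. If $n=1$ then $\alpha\in\Z$, so $\Z_L=\Z[\alpha]$ and the statement is vacuous; assume henceforth that $n\ge 2$.

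First I would reduce $f$ modulo $p$: since $p\mid a_i$ for every $i<n$, we have $\overline f(x)=x^n$ in $(\Z/p\Z)[x]$. From this I would argue that $x$ is the only monic polynomial irreducible modulo $p$ that could create an obstruction. Indeed, if $\overline g(x)\neq x$ is monic and irreducible modulo $p$, then $\gcd(\overline g(x),x)=1$, so $\overline g(x)^2\nmid x^n=\overline f(x)$, i.e. $f(x)\notin\langle p,g^2(x)\rangle$; since $\langle p,g(x)\rangle^2\subseteq\langle p,g^2(x)\rangle$ by Lemma \ref{maxint}, this gives $f(x)\notin\langle p,g(x)\rangle^2$. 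Hence it remains only to show $f(x)\notin\langle p,x\rangle^2$.

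For that I would use Corollary \ref{resqu} with $g(x)=x$. As $\overline f(x)=x^n$ with $n\ge 2$, the monomial $x^2$ divides $\overline f(x)$, so $f(x)\in\langle p,x^2\rangle$, and Corollary \ref{resqu} yields $f(x)\in\langle p,x\rangle^2\iff f(x)\in\langle p^2,x\rangle$. But the remainder of $f(x)$ upon division by $x$ is its constant term $a_0=f(0)$, so $f(x)\in\langle p^2,x\rangle$ would force $p^2\mid a_0$, contradicting the Eisenstein hypothesis $p^2\nmid a_0$. Therefore $f(x)\notin\langle p,x\rangle^2$, and together with the previous paragraph $f(x)$ lies in no ideal of the form $\langle p,g(x)\rangle^2$; by Theorem 2.B, $p\nmid[\Z_L:\Z[\alpha]]$.

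The proof is short, and the only step requiring some care is isolating $g(x)=x$ as the sole candidate for a ``bad'' irreducible factor modulo $p$ — this is exactly where the inclusion $\langle p,g\rangle^2\subseteq\langle p,g^2\rangle$ of Lemma \ref{maxint} enters, so that one genuinely need not test any other irreducible. (An alternative, less elementary route is to observe that an Eisenstein polynomial at $p$ makes $\Z_p[x]/(f(x))$ a discrete valuation ring, hence integrally closed, whence $\Z_p\otimes_\Z\Z_L=\Z_p[\alpha]$ and $p$ cannot divide the index; but the criterion-based argument above stays within the machinery already assembled in this section.)
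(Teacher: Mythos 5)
Your argument is correct, but it is not the paper's: the paper does not prove Lemma \ref{th2.1} at all, it simply quotes it as a known result from \cite[Lemma 2.17]{NW}, where the usual proof is valuation-theoretic (an Eisenstein polynomial at $p$ makes $p$ totally ramified and one compares $p$-adic valuations), essentially the ``less elementary route'' you mention in your closing parenthesis. What you do instead is rederive the lemma entirely inside the section's own ideal-theoretic toolkit: the remark after Theorem 2.B (that $p\mid[\Z_L:\Z[\alpha]]$ iff $f(x)\in\langle p,g(x)\rangle^2$ for some monic $g$ irreducible mod $p$), Lemma \ref{maxint}, and Corollary \ref{resqu}. All the steps check: since $\overline f(x)=x^n$ over $\Z/p\Z$, any irreducible $\overline g\neq x$ cannot divide $\overline f$, so only $g(x)=x$ needs testing; for $n\ge 2$ one has $f(x)\in\langle p,x^2\rangle$, so Corollary \ref{resqu} converts $f(x)\in\langle p,x\rangle^2$ into $f(x)\in\langle p^2,x\rangle$, i.e.\ $p^2\mid a_0$, which the Eisenstein hypothesis forbids (and $n=1$ is trivial). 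This is in fact the exact mirror of the paper's own proof of Proposition \ref{re}, which uses Corollary \ref{resqu} with $g(x)=x$ in the opposite direction ($p^2\mid f(0)$ forces $p$ to divide the index). The trade-off: the paper's citation keeps the exposition short and leans on standard ramification theory, while your derivation is self-contained, uses no valuation theory, and makes the lemma a one-line consequence of machinery the paper has already built.
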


\section{Proof of the main results}
\begin{proof}[Proof of Theorem \ref{sufnec}]
    First, assume that $f(x^k)$ is monogenic. Then by Lemma \ref{L1}, $f(x)$ and $f(x^p)$ are also monogenic for all $p\mid k$, so (1) and (2) hold. Furthermore, there is no prime $p$ that divides the index of $f(x^k)$, so by Proposition \ref{re}, $p^2\nmid f(0)$ for any prime $p$, i.e., $f(0)$ is squarefree.
    
    Conversely, assume that (1), (2) and (3) hold. By Proposition \ref{C2}, if $p\nmid k$, then (1) and (3) together imply that $p$ does not divide the index of $f(x^k)$.
   Let $p\mid k$ and write $k=p^u\ell$, where $p\nmid \ell$. Using Corollary  \ref{C1} and (2), we see that $p$ does not divide the index of $f(x^{p^u})$. Now, using (3) and Proposition \ref{C2}, we get that $p$ does not divide the index of $f(x^k)$. Thus, $f(x^{k})$ is monogenic.
\end{proof}

\begin{proof}[Proof of Corollary \ref{TH2}] Let $p$ be a prime divisor of the index of $f(x^k)$. Suppose, that $p\nmid  f(0)$. If $p\nmid k$, then by Proposition \ref{C2}, $p$ divides the index of $f(x)$, which contradicts (1). On the other hand, if $p\mid k$, then we can write $k=p^u\ell$, where $p\nmid \ell$. By Proposition \ref{C2}, $p$ divides the index of $f(x^{p^u})$, hence, by Corollary \ref{C1}, $p$ divides the index of $f(x^p)$. This contradicts (2), so $p$ must be a divisor of $f(0)$.
\end{proof}
\begin{proof}[Proof of Corollary \ref{abcde}]  
One can easily see that the monogenity of an irreducible polynomial of the type $f(x)=x^d+A\cdot h(x)\in\Z[x],$ with $d>1,$ and $\deg h(x)<d$, implies that $A$ is squarefree. Let $p\mid A$. Then $f(x)\in \langle p, x^2\rangle$, so by Corollary \ref{resqu} and Theorem 2.B, the monogenity of $f(x)$ implies that $A$ is squarefree. In this situation, $f(x)$ is an Eisenstein polynomial with respect to $p$ and hence by Lemma \ref{th2.1}, $p$ does not divide the index of $f(x)$. Therefore the proof is complete keeping Theorem \ref{sufnec} in mind.
\end{proof}
\begin{proof}[Proof of Theorem \ref{uvw}]  
    Set $\deg f(x)=d.$ Using Lemma \ref{dfpow}, we get 
    $$D(f(x^{p^u}))=\pm D(f(x))^{p^u}\cdot p^{udp^u}\cdot f(0)^{p^u-1}.$$
    (1) In view of Lemma \ref{abcd}, Equation \eqref{eq} and the hypothesis that $f(x)$ is monogenic, we conclude that any prime  divisor of the index of $f(x^{p^u})$ is either $p$ or a divisor of $f(0)$. If $|f(0)|=1$, then the index of $f(x^{p^u})$ is equal to $p^s$, for some non-negative integer $s$. Let $q\neq p$ be a prime divisor of $f(0)$. Since $f(x)$ is monogenic and $f(0)$ is squarefree,  Proposition \ref{C2} implies that $q$ does not divide the index of $f(x^{p^u})$. Hence, $p$ is the only possible prime divisor of the index of $f(x^{p^u})$.\\ 
(2) Using Corollary \ref{C1}, it is enough to prove that $p$ does not divide the index of $f(x^p)$ if and only if $\frac{f(x^p)-f(x)^p}{p}$ is coprime to $f(x)$ modulo $p$. Let $\overline{f}(x)=\displaystyle\prod_{i=1}^t\overline{g}^{e_i}_i(x)$ be the factorization of $\overline{f}(x)$ into a product of powers of distinct irreducible polynomials $\overline{g}_i(x)$, where each $g_i(x)\in\Z[x]$ monic. Here, $f(x^p)\in \langle p,g^2_i(x)\rangle$ for all $1\le i\le t$. We can write 
$$f(x)=\displaystyle\prod_{i=1}^tg_i^{e_i}(x)+p\cdot a(x),$$
for some $a(x)\in\Z[x]$. Then
$$f(x)^p=\bigg(\displaystyle\prod_{i=1}^tg_i^{e_i}(x)+p\cdot a(x)\bigg)^p=\bigg(\displaystyle\prod_{i=1}^tg_i^{e_i}(x)\bigg)^p+p^2\cdot t(x),$$ for some $t(x)\in\Z[x]$. Therefore, $f(x)^p\in \langle p^2, g_i(x) \rangle$ for each $1\le i \le t.$ Notice, that $f(x^p)\in \langle p^2,g_i(x)\rangle$ if and only if $f(x^p)-f(x)^p \in \langle p^2,g_i(x)\rangle.$  Set $f(x^p)-f(x)^p=p\cdot c(x),$ for some $c(x)\in\Z[x]$. Then, $f(x^p)\in\langle p^2, g_i(x)\rangle$ if and only if $p\cdot c(x)\in\langle p^2, g_i(x)\rangle$. Since  $g_i(x)\in\Z[x]$ is monic and irreducible modulo $p$ for each $i$, it is clear that  $p\cdot c(x)\in\langle p^2, g_i(x)\rangle$ if and only if $c(x)=g_i(x)\cdot u(x)+p\cdot v(x)$, for certain $u(x), v(x)\in\Z[x]$. Equivalently, $f(x^p)\in \langle p^2,g_i(x)\rangle$ if and only if $\frac{f(x^p)-f(x)^p}{p}$ is divisible by $g_i(x)$ modulo $p$. Thus, we get that $f(x^p)\not\in \langle p^2,g_i(x)\rangle$ for any $1\le i\le t$ if and only if $\frac{f(x^p)-f(x)^p}{p}$ is coprime to $f(x)$ modulo $p$. Using Corollary \ref{resqu}, $f(x^p)\not\in\langle p,g_i(x)\rangle^2$ for any $1\le i\le t$ if and only if $\frac{f(x^p)-f(x)^p}{p}$ is coprime to $f(x)$ modulo $p$. Hence, our result follows from Theorem 2.B.
\end{proof}

\begin{proof}[Proof of Theorem \ref{abc}] Let us start by proving that any prime divisor $p$ of $k$ does not divide the index of the irreducible polynomial $f(x^p)$ if and only if (2) holds.  Let $p$ be a prime divisor of $k$, and write
    \begin{align*}
			f(x^p)=&\big(x^d+A\cdot h(x)-A\cdot h(x)\big)^p+A\cdot h(x^p)\\
			=&\big(f(x)-A\cdot h(x)\big)^p+A\cdot h(x^p)\\
			=&f(x)^p+(-1)^pA^p\cdot h(x)^p+p\cdot f(x)\cdot r(x)+A\cdot h(x^p),
    \end{align*} 
    where 
    $$r(x)=\frac{1}{p}\bigg(\displaystyle\sum_{i=1}^{p-1}{p \choose i}f^{i-1}(x)\big(-A\cdot h(x)\big)^{p-i}\bigg).$$
    This implies, that 
    $$\frac{f(x^p)-f(x)^p}{p}=\frac{\big(-A\cdot h(x)\big)^p+p\cdot f(x)\cdot r(x)+A\cdot h(x^p)}{p}.$$ 
    Therefore, $\frac{f(x^p)-f(x)^p}{p}$ is coprime to $f(x)$ modulo $p$ if and only if
    $\frac{1}{p}[(-A\cdot h(x))^p+A\cdot h(x^p)]$ is coprime to $f(x)$ modulo $p$. Thus, using Theorem \ref{uvw}, $p$ does not divide the index of $f(x^p)$ if and only if $\frac{1}{p}[(-A\cdot h(x))^p+A\cdot h(x^p)]$ is coprime to $f(x)$ modulo $p.$ 
  
  Using Lemma \ref{L1}, the monogenity of $f(x^k)$ implies that both (1) and (2) hold. Conversely, suppose that both $(1)$ and $(2)$ hold. If $|A|=1$, then the result follows from Corollary \ref{TH2}. Let $p$ be a prime dividing $A$. If $p^2\mid A$ then Proposition \ref{re} shows that $p$ divides the index of $f(x^p)$, a contradiction to (2). Thus, $A$ must be squarefree. Since $|h(0)|=1$ and $A$ is squarefree, we can see that $f(x^k)$ is an Eisenstein polynomial with respect to $p$. Therefore, by Lemma \ref{th2.1}, $p$ does not divide the index of $f(x^k)$. This shows, that the index of $f(x^k)$ and $A$ are coprime. Hence, Corollary \ref{TH2} implies that $f(x^k)$ is monogenic.
  \end{proof}

\section{Infinite examples}
In this section, we introduce some well-known infinite parametric families of polynomials for which Theorem \ref{sufnec} can be applied efficiently. The first example is the family of pure polynomials obtained by choosing $f(x)=x-A$, where $A$ is an integer. In this case, our theorems imply the following well-known result (see \cite[Theorem 1.3]{jh-kh}, \cite[Corollary 1.5]{df}, \cite[Theorem 1.1]{gas}).

\begin{proposition}\label{abbb}
   Let $k>1$ be an integer and $f(x)=x-A\in\Z[x]$. If $f(x^k)$ is irreducible, then $f(x^k)$ is monogenic if and only if 
    \begin{enumerate}[(i)]
        \item $p^2\nmid A^p-A$ for any prime $p\mid k$ and
        \item $A$ is squarefree.
    \end{enumerate}
\end{proposition}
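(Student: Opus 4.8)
The plan is to apply Theorem \ref{sufnec} directly to $f(x)=x-A$ and simplify each of its three conditions. First, since $f(x)=x-A$ is linear, it is trivially monogenic: $\Z[\theta]=\Z[A]=\Z=\Z_K$ with $K=\Q$. So condition (1) of Theorem \ref{sufnec} is automatically satisfied and carries no content. Second, $f(0)=-A$, so condition (3) of Theorem \ref{sufnec} says precisely that $A$ is squarefree, which is item (ii). It remains to translate condition (2)---that $p$ does not divide the index of $f(x^p)$ for each prime $p\mid k$---into item (i).

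For condition (2) I would invoke Theorem \ref{uvw}(2) (or equivalently Remark \ref{remcop}): $p$ does not divide the index of $f(x^p)$ if and only if $\frac{1}{p}(f(x^p)-f(x)^p)$ is coprime to $f(x)$ modulo $p$. Here $f(x^p)=x^p-A$ and $f(x)^p=(x-A)^p$, so
\[
\frac{f(x^p)-f(x)^p}{p}=\frac{(x^p-A)-(x-A)^p}{p}=:c(x)\in\Z[x].
\]
Since $f(x)=x-A$ is linear, coprimality of $c(x)$ with $f(x)$ modulo $p$ is equivalent to $\overline{f}(x)\nmid \overline{c}(x)$, i.e. to $c(A)\not\equiv 0 \pmod p$. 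Now evaluate: $c(A)=\frac{1}{p}\big((A^p-A)-(A-A)^p\big)=\frac{A^p-A}{p}$ (note $A^p\equiv A\pmod p$ by Fermat, so this is an integer, consistent with $c\in\Z[x]$). Hence $p\nmid \frac{A^p-A}{p}$, which is exactly $p^2\nmid A^p-A$, giving item (i). Assembling: $f(x^k)$ is monogenic $\iff$ (1)$\wedge$(2)$\wedge$(3) of Theorem \ref{sufnec} $\iff$ (true)$\wedge$(i)$\wedge$(ii).

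The only mild subtlety---not really an obstacle---is being careful that $c(x)$ genuinely lies in $\Z[x]$ and that evaluating modulo $p$ commutes with the division by $p$; this is handled by Theorem \ref{uvw}, which already packages $\frac{1}{p}(f(x^p)-f(x)^p)$ as an integer polynomial. One could alternatively bypass Theorem \ref{uvw} and argue more elementarily: $f(x^k)=x^k-A$, and a prime $p\mid k$ dividing the index of $x^p-A$ is detected via Dedekind's criterion applied to $\overline{x^p-A}=(x-A)^p \bmod p$ (using $A^p\equiv A$), with $M(x)=\frac{1}{p}\big((x^p-A)-(x-A)^p\big)$, and the condition $\overline{x-A}\nmid \overline{M}(x)$ unwinds to $p^2\nmid A^p-A$ as above; but routing through Theorem \ref{uvw} and Theorem \ref{sufnec} keeps the argument short and self-contained. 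Finally, one should remark (as the cited references do) that for $p\nmid k$ no new condition arises, which is exactly what Theorem \ref{sufnec} guarantees by folding everything into conditions (1) and (3).
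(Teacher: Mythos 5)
Your proposal is correct and takes essentially the same route as the paper: both reduce the statement to Theorem \ref{sufnec}, note that condition (1) is vacuous and condition (3) is exactly the squarefreeness of $A$, and unwind condition (2) to $p^2\nmid A^p-A$. The only cosmetic difference is that the paper translates condition (2) directly via Theorem 2.B and Corollary \ref{resqu} (the remainder of $x^p-A$ upon division by $x-A$, namely $A^p-A$, must not be divisible by $p^2$), whereas you invoke Theorem \ref{uvw}(2)/Remark \ref{remcop} and evaluate $\frac{1}{p}\big((x^p-A)-(x-A)^p\big)$ at $x=A$ — the same computation, since Theorem \ref{uvw}(2) is itself proved from that corollary.
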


\begin{proof} 
    The polynomial $f(x)=x-A$ is always monogenic, so (1) of Theorem \ref{sufnec} is true for any $A\in\Z$. In view of Theorem \ref{sufnec}, it is enough to prove that (2) of Theorem \ref{sufnec} and (i) are equivalent. Here $f(x^p)\equiv (x-A)^p\mod p$. Theorem 2.B together with Corollary \ref{resqu} imply that $p$ divides the index of $f(x^p)$ if and only if the remainder of $f(x^p)$ divided by $x-A$, which is $A^p-A$, is a multiple of $p^2$. This completes the proof.
\end{proof}

One can easily generalize the result above. Assume that $p$ is a prime such that the polynomial $f(x)\in\Z[x]$ splits completely modulo $p$. Then $p$ divides the index of $f(x^{p^u})$ if and only if $p$ divides the index of $f(x^p)$. Thus, for any linear factor $(x-r)$ of $f(x)$ modulo $p$, we only have to check if $p^2$ divides $f(r^p)$ or not. This leads us to the following proposition.

\begin{proposition}\label{split}
	Let $f(x)\in\Z[x]$ and let $k$ be a positive integer, such that $f(x)$ splits completely modulo every prime divisor $p$ of $k$ and $f(x^k)$ is irreducible. Then $f(x^k)$ is monogenic if and only if
	\begin{enumerate}[(i)]
        \item $f(x)$ is monogenic, 
        \item $p^2\nmid f(r^p)$, for any prime $p\mid k$ and $r=0,\ldots,p-1$, and
		\item $f(0)$ is squarefree.
    \end{enumerate}
\end{proposition}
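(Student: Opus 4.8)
\textbf{Proof proposal for Proposition \ref{split}.}

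The plan is to apply Theorem \ref{sufnec} and reduce its condition (2) to the elementary divisibility test in (ii). Theorem \ref{sufnec} already tells us that $f(x^k)$ is monogenic if and only if $f(x)$ is monogenic, $p$ does not divide the index of $f(x^p)$ for every prime $p\mid k$, and $f(0)$ is squarefree. Conditions (i) and (iii) here are literally conditions (1) and (3) of Theorem \ref{sufnec}, so the only thing to prove is that, under the hypothesis that $f(x)$ splits completely modulo $p$, the statement ``$p$ does not divide the index of $f(x^p)$'' is equivalent to ``$p^2\nmid f(r^p)$ for all $r=0,\dots,p-1$''.

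For this, fix a prime $p\mid k$ and write $\overline f(x)=\prod_{r\in S}(x-r)^{e_r}$ over $\Z/p\Z$, where $S\subseteq\{0,1,\dots,p-1\}$ is the set of roots of $\overline f$; this is possible precisely because $f$ splits completely mod $p$. Then $f(x^p)\equiv f(x)^p\equiv \prod_{r\in S}(x-r)^{pe_r}\pmod p$, so for each $r\in S$ the linear polynomial $g(x)=x-r$ is a multiple irreducible factor of $f(x^p)$ modulo $p$; that is, $f(x^p)\in\langle p,(x-r)^2\rangle$. By Corollary \ref{resqu} together with Theorem 2.B, $p$ divides the index of $f(x^p)$ if and only if $f(x^p)\in\langle p^2, x-r\rangle$ for some $r\in S$, which happens exactly when the remainder of $f(x^p)$ upon division by $x-r$ is divisible by $p^2$. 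That remainder is $f(r^p)$. Conversely, if $r\notin S$ then $x-r$ is not a factor of $\overline f(x^p)$ at all, and moreover $f(r)\not\equiv0\pmod p$ forces $f(r^p)\equiv f(r)^p\not\equiv 0\pmod p$, so $p^2\nmid f(r^p)$ is automatic; hence quantifying over all $r\in\{0,\dots,p-1\}$ rather than just $r\in S$ costs nothing. Combining, $p$ divides the index of $f(x^p)$ if and only if $p^2\mid f(r^p)$ for some $r\in\{0,\dots,p-1\}$, which is the negation of (ii) for that prime. Running this over all primes $p\mid k$ shows that condition (2) of Theorem \ref{sufnec} is equivalent to (ii), and the proposition follows.

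I do not expect a serious obstacle here; the argument is a direct specialization of the machinery already set up (Corollary \ref{resqu}, Theorem 2.B, and the observation used in the proof of Proposition \ref{abbb} that the remainder of $f(x^p)$ modulo $x-r$ is $f(r^p)$). The only point requiring a little care is the bookkeeping over the roots: one must note both that non-root values $r$ contribute vacuously because $f(r^p)$ is then a unit mod $p$, and that distinct linear factors mod $p$ are automatically coprime so the failure of the index condition can be detected one root at a time. Both are immediate, so the proof is essentially a citation of Theorem \ref{sufnec} plus a one-paragraph computation.
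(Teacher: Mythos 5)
Your proof is correct and is essentially the paper's own argument: the paper justifies this proposition exactly by citing Theorem \ref{sufnec} and observing (as in the proof of Proposition \ref{abbb}) that, since $f(x)$ splits completely modulo $p$, the multiple irreducible factors of $f(x^p)$ modulo $p$ are the linear factors $x-r$, so by Theorem 2.B and Corollary \ref{resqu} the index condition reduces to $p^2\nmid f(r^p)$, with non-roots $r$ contributing vacuously. No discrepancies to report.
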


We can apply the proposition above to infinite parametric families of number fields. We present this method on the family of simplest cubic polynomials of the form
$$f(x)=x^3-mx^2-(m+3)x-1.$$ 

\begin{proposition}\label{hi}
    Let $m\in\Z$ and $f(x)=x^3-mx^2-(m+3)x-1$. Let $k$ be a positive integer such that $f(x)$ is reducible modulo every odd prime $p\mid k$. Then $f(x^k)$ is monogenic if and only if
    \begin{enumerate}[(i)]
        \item $f(x)$ is monogenic, and
        \item $m\not\equiv\dfrac{r^{3p}-3r^p-1}{r^p(r^p+1)}\mod p^2$, for any odd prime $p\mid k$ and $r=1,\ldots,p-2$. 
    \end{enumerate}
\end{proposition}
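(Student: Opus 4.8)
The plan is to deduce the statement from Theorem \ref{sufnec}. Since $f(0)=-1$, condition (3) of that theorem holds automatically and no prime divides $f(0)$, while condition (1) is precisely hypothesis (i). Thus $f(x^k)$ is monogenic if and only if (i) holds together with condition (2) of Theorem \ref{sufnec} (that $p$ does not divide the index of $f(x^p)$ for every prime $p\mid k$), and it suffices to show that, under the assumption that $f$ is monogenic, condition (2) is equivalent to (ii). We may assume $k\ge 2$, the case $k=1$ being vacuous.

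So assume $f$ is monogenic. A direct computation gives $D(f)=(m^2+3m+9)^2$, a perfect square, and $f$ is irreducible of degree $3$ (it has no rational root, since $f(1)=-2m-3$ and $f(-1)=1$ are nonzero); hence $K=\Q(\theta)$, for $\theta$ a root of $f$, is a cyclic cubic field. Because $f$ is monogenic, $\Z_K=\Z[\theta]$, so for every prime $p$ the monic irreducible factors of $\bar f$ over $\F_p$ correspond to the primes of $\Z_K$ above $p$; and in a cyclic cubic field each prime is inert, splits completely into three distinct primes, or is totally ramified as $p\Z_K=\mathfrak p^3$. Therefore, for an odd prime $p\mid k$, the hypothesis ``$f$ reducible modulo $p$'' rules out the inert case, and $\bar f$ is either a product of three distinct monic linear factors or a cube $(x-c)^3$ of a monic linear factor. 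In either case $\bar f$ splits into monic linear polynomials over $\F_p$, and all its roots lie in $\{1,\dots,p-2\}$, because $f(0)=-1$ and $f(-1)=1$ are units modulo $p$.

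Now fix an odd prime $p\mid k$, and write $\bar f(x)=\prod_i(x-r_i)$ with the $r_i\in\F_p$ listed with multiplicity. Then $\bar f(x^p)=\bar f(x)^p=\prod_i(x-r_i)^p$, so every $x-r_i$ is a multiple irreducible factor of $f(x^p)$ modulo $p$, i.e. $f(x^p)\in\langle p,(x-r_i)^2\rangle$. By Corollary \ref{resqu} and Theorem 2.B, $p$ divides the index of $f(x^p)$ if and only if $f(x^p)\in\langle p^2,x-r_i\rangle$ for some $i$, equivalently $p^2\mid f(r_i^p)$ for some root $r_i$ of $\bar f$. For $r\in\{1,\dots,p-2\}$ that is not a root of $\bar f$ we have $f(r^p)\equiv f(r)^p\not\equiv 0\pmod p$ (as $r^p\equiv r$), so $p^2\nmid f(r^p)$ automatically; hence $p$ does not divide the index of $f(x^p)$ if and only if $p^2\nmid f(r^p)$ for all $r\in\{1,\dots,p-2\}$. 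Finally, $p^2\mid f(r^p)=r^{3p}-mr^{2p}-(m+3)r^p-1$ is equivalent to $m\,r^p(r^p+1)\equiv r^{3p}-3r^p-1\pmod{p^2}$, and since $r\not\equiv 0,-1\pmod p$ the element $r^p(r^p+1)$ is a unit modulo $p^2$, so this is equivalent to $m\equiv\dfrac{r^{3p}-3r^p-1}{r^p(r^p+1)}\pmod{p^2}$; that is, condition (2) of Theorem \ref{sufnec} holds at the odd prime $p$ exactly when (ii) does. It remains to observe that if $2\mid k$ then $\bar f$ is irreducible modulo $2$ (it has no root in $\F_2$) and a short explicit computation shows $\tfrac12\bigl(f(x^2)-f(x)^2\bigr)$ is coprime to $f$ modulo $2$, so by Theorem \ref{uvw}(2) the prime $2$ never divides the index of $f(x^2)$ and imposes no condition. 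Combining these facts via Theorem \ref{sufnec} finishes the proof. The points demanding the most care are the passage from ``reducible modulo $p$'' to the linear-factors-or-cube dichotomy (where the cyclic cubic structure and the monogenicity of $f$ are essential) and the separate treatment of $p=2$, which is not covered by the ``splits completely'' hypothesis of Proposition \ref{split} but turns out to be harmless.
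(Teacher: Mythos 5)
Your argument is correct and follows essentially the same route as the paper: reduce via Theorem \ref{sufnec} to condition (2) at each odd prime $p\mid k$, use the cyclic cubic structure (you via monogenicity and prime splitting, the paper via the Galois group) to see that $\overline{f}$ splits into linear factors with roots in $\{1,\dots,p-2\}$, translate ``$p$ divides the index of $f(x^p)$'' into ``$p^2\mid f(r^p)$ for some root $r$'' (you re-derive this from Corollary \ref{resqu} and Theorem 2.B where the paper invokes Proposition \ref{split}), and dispose of $p=2$ by a separate computation. The only omission is the observation that $f(x^k)$ is irreducible for every $k$ (the paper cites \cite[Lemma 3.1]{ljj}), which is needed before Theorem \ref{sufnec} can be applied, in particular in the ``if'' direction.
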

\begin{proof}
    We note, that $f(x^k)$ is always irreducible in $\Z[x]$ (see  \cite[Lemma 3.1]{ljj}).
    Referring back to Theorem  \ref{sufnec}, we only have to prove that (2) of Theorem \ref{sufnec} is equivalent to  (ii). Let $p$ be an odd prime. The Galois group of $f(x)$ is the cyclic group of order three, so the reducibility of $f(x)$ modulo $p$ implies that it splits into three linear factors. By Proposition \ref{split}, we have to show that $p^2\nmid f(r^p)$, for any prime $p\mid k$ and $r=0,\ldots,p-1$. 

	Since $f(0)=-1$ and $f(-1)=1$, then $p^2\nmid f(r^p)$ if $r\equiv 0,-1\mod p$. For the other values $r=1,\ldots,p-2$, we have $f(r^p)=r^{3p}-mr^{2p}-(m+3)r^p-1\equiv 0\mod {p^2}$ if and only if
    $$m\equiv\dfrac{r^{3p}-3r^p-1}{r^p(r^p+1)}\mod{p^2}.$$
	Hence, $p$ does not divide the index of $f(x^p)$ if and only if (ii) holds.
    Finally, it can be easily checked that for $p=2$, $f(x)$ is always irreducible modulo 2, and $f(x^2)$ is not contained in $\langle2,f(x)\rangle^2$, so $2$ can 
    not divide the index of $f(x^k)$.
\end{proof}

\begin{remark}
    In Proposition \ref{hi}, the condition $f(x)$ is reducible modulo every odd prime $p\mid k$ can not be dropped. For example, if $k=13$ and $m\equiv 6\mod{13}$, then $f(x)$ is irreducible modulo $13$, however, if $m=71$, then both (i) and (ii) are true, but 13 divides the index of $f(x^k)$. 
    For the sake completeness, see \cite[Lemma 3.3]{ljj}, in which the author gives a characterization of the monogenity of $f(x^p)$, where $f(x)=x^3-mx^2-(m+3)x-1$ is irreducible modulo $p$.
\end{remark}

In the next example, we apply Theorem \ref{abc} to $f(x)=x^d+A(Bx+1)^m$. The monogenity of $f(x)$ is investigated in \cite{susu}.

\begin{proposition}
	Let $A$ and $B$ be integers and $d$ and $m$ positive integers such that $d>m$. Assume, that  $\gcd(d,mB)=1.$ Let $f(x)=x^d+A(Bx+1)^m$ and let $k$ be a positive integer such that $f(x^k)$ is irreducible. Then $f(x^k)$ is monogenic if and only if the following statements hold:
	\begin{enumerate}
		\item Both $A$ and $d^d+(-1)^{d+m}B^dm^m(d-m)^{d-m}A$ are squarefree.
		\item For any prime $p$ dividing $k$, 
            $$\frac{1}{p}[\big(-A(Bx+1)^m\big)^p+A(Bx^p+1)^m]$$
            is coprime to $f(x)$ modulo $p.$
	\end{enumerate}
 Furthermore, if $\rad(k) \mid \rad(A)$, then $f(x^k)$ is monogenic if and only if both $A$ and $d^d+(-1)^{d+m}B^dm^m(d-m)^{d-m}A$ are squarefree.
\end{proposition}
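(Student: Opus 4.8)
The plan is to read the statement off from Theorem~\ref{abc} with $h(x)=(Bx+1)^m$, combined with the monogenity criterion for $f(x)=x^d+A(Bx+1)^m$ obtained in \cite{susu}. Since $|h(0)|=1$ and $\deg h(x)=m<d$, Theorem~\ref{abc} says that $f(x^k)$ is monogenic if and only if $f(x)$ is monogenic and, for every prime $p\mid k$, the polynomial $\frac{1}{p}\big(A(Bx^p+1)^m+(-A(Bx+1)^m)^p\big)$ is coprime to $f(x)$ modulo $p$. The second condition is exactly condition~$(2)$ of the proposition, so all that is left is to rewrite ``$f(x)$ is monogenic'' as condition~$(1)$.

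For that rewriting I would appeal to \cite{susu}. Computing the discriminant by means of Lemma~\ref{lemma a} and \eqref{beta} gives $D(f)=\pm A^{d-1}\Delta_0$, where $\Delta_0=d^d+(-1)^{d+m}B^dm^m(d-m)^{d-m}A$, and the Dedekind--Uchida analysis of the ramified primes carried out in \cite{susu} shows that, under the hypothesis $\gcd(d,mB)=1$, $f(x)$ is monogenic precisely when both $A$ and $\Delta_0$ are squarefree. Together with the first paragraph this establishes the equivalence of the monogenity of $f(x^k)$ with conditions~$(1)$ and~$(2)$. I expect this step to be a quotation rather than new work; the one place that needs care, and the only genuine obstacle, is seeing that $\gcd(d,mB)=1$ is exactly what forces $p\nmid d(d-m)B$ for every prime $p$ dividing $\Delta_0$ but not $A$, which is what is needed to treat the repeated factor of $\overline{f}$ modulo such a prime and to obtain the clean ``both squarefree'' form.

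For the last assertion, assume $\rad(k)\mid\rad(A)$, so that every prime $p\mid k$ divides $A$. If $f(x^k)$ is monogenic then $f(x)$ is monogenic by Lemma~\ref{L1}, hence $A$ and $\Delta_0$ are squarefree by the criterion above. Conversely, suppose $A$ and $\Delta_0$ are squarefree; then $f(x)$ is monogenic, and each prime $p\mid k$ satisfies $p\mid A$ and $p^2\nmid A$. Fix such a $p$. Then $\overline{f}(x)=x^d$, while $(-A(Bx+1)^m)^p$ is divisible by $A^p$, hence by $p^p$, so $\frac{1}{p}(-A(Bx+1)^m)^p$ is divisible by $p^{p-1}$ and therefore vanishes modulo $p$; consequently $$\frac{1}{p}\big(A(Bx^p+1)^m+(-A(Bx+1)^m)^p\big)\equiv \frac{A}{p}(Bx^p+1)^m \pmod{p},$$ which has nonzero constant term modulo $p$ and so is coprime to $\overline{f}(x)=x^d$. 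Thus condition~$(2)$ holds automatically for every prime $p\mid k$, and the characterization collapses to condition~$(1)$: $f(x^k)$ is monogenic if and only if $A$ and $\Delta_0$ are squarefree. Equivalently, for each $p\mid A$ the polynomials $f(x)$, $f(x^p)$ and $f(x^k)$ are Eisenstein with respect to $p$, so $p$ divides none of the associated indices by Lemma~\ref{th2.1}, and Corollary~\ref{TH2} then excludes any other prime divisor of the index of $f(x^k)$, giving the same conclusion.
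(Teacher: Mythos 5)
Your proposal is correct and follows essentially the same route as the paper: apply Theorem~\ref{abc} with $h(x)=(Bx+1)^m$ and quote \cite[Theorem 1.1]{susu} to translate the monogenity of $f(x)$ into the squarefreeness of $A$ and $d^d+(-1)^{d+m}B^dm^m(d-m)^{d-m}A$. Your explicit verification that condition (2) holds automatically when $\rad(k)\mid\rad(A)$ (via $\overline f(x)=x^d$ and the Eisenstein/Corollary~\ref{TH2} argument) is exactly the content the paper delegates to Corollary~\ref{abcde}, so there is no substantive difference.
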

\begin{proof}
If $A\neq \pm1$ is a squarefree integer, then $f(x^k)$ is an Eisenstein polynomial with respect to a prime divisor of $k$, thus $f(x^k)$ is irreducible. This implies, that there exist an infinite number of irreducible polynomials of the form $f(x^k)=x^{dk}+A(Bx^k+1)^m$. By \cite[Theorem 1.1]{susu}, $f(x)$ is monogenic if and only if both $A$ and $d^d+(-1)^{d+m}C^dm^m(d-m)^{d-m}A$ are squarefree. Set $h(x)=(Bx+1)^m$, then $h(0)=1$ and $\deg h(x)<d$, and our result follows from Theorem \ref{abc}. \end{proof}

In the last example, there is no parameter in the polynomial $f(x)$, but it shows how we can create infinite monogenic examples using Theorem \ref{sufnec}.

\begin{proposition}
    Let $f(x)=x^2-x-1$ and $k$ be a positive integer, such that all of the prime factors of $k$ are less than 10000 and  $f(x^k)$ is irreducible. Then $f(x^k)$ is monogenic
\end{proposition}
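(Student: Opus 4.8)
The plan is to apply Theorem~\ref{abc} with $A=-1$ and $h(x)=x+1$, so that $f(x)=x^{2}+A\cdot h(x)=x^{2}-x-1$, with $d=2>1$, $|h(0)|=1$ and $\deg h(x)=1<d$ (one could equally invoke Theorem~\ref{sufnec} directly, using in addition that $f(0)=-1$ is squarefree). Two of the required inputs are immediate: the discriminant of $f(x)=x^{2}-x-1$ is $D(f)=5$, which is squarefree, so by~\eqref{eq} $f(x)$ is monogenic. Thus the whole statement reduces to checking condition~(2) of Theorem~\ref{abc} (equivalently condition~(2) of Theorem~\ref{sufnec}, in view of Remark~\ref{remcop}): for every prime $p\mid k$, the integer polynomial
$$P_{p}(x)\;:=\;\frac1p\bigl((x+1)^{p}-x^{p}-1\bigr)\;=\;\sum_{i=1}^{p-1}\frac1p\binom{p}{i}x^{i}$$
is coprime to $f(x)=x^{2}-x-1$ modulo $p$.

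It is convenient to reformulate this condition numerically. Dividing $x^{2p}-x^{p}-1$ by $x^{2}-x-1$ and reducing powers via $x^{n}\equiv F_{n}x+F_{n-1}\pmod{x^{2}-x-1}$, where $F_{n}$ denotes the $n$-th Fibonacci number, one obtains the remainder $R_{p}(x)=(F_{2p}-F_{p})x+(F_{2p-1}-F_{p-1}-1)$. Since $\overline f(x^{p})=\overline f(x)^{p}$, all coefficients of $R_{p}(x)$ are divisible by $p$, and a short computation (cf.\ the proofs of Theorems~\ref{uvw} and~\ref{abc}) gives $\overline{P_{p}}(x)\equiv\overline{R_{p}(x)/p}$ modulo $\overline f(x)$ in $\mathbb{F}_{p}[x]$. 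Hence $p$ does not divide the index of $f(x^{p})$ if and only if $\gcd\bigl(\overline{R_{p}(x)/p},\overline f(x)\bigr)=1$ in $\mathbb{F}_{p}[x]$; writing $F_{2p}=F_{p}L_{p}$ and $F_{2p-1}=F_{p}^{2}+F_{p-1}^{2}$, with $L_{p}$ the $p$-th Lucas number, this can fail only when $\overline{R_{p}(x)/p}$ and $\overline f(x)$ share a root over $\overline{\mathbb{F}_{p}}$, which in particular occurs whenever $p^{2}\mid F_{p}(L_{p}-1)$ and $p^{2}\mid F_{p}^{2}+F_{p-1}^{2}-F_{p-1}-1$ simultaneously.

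The remaining task is the finite verification that the coprimality of $\overline{P_{p}}(x)$ and $\overline f(x)$ holds for every prime $p<10000$. The two exceptional primes are handled by hand: for $p=2$ we have $\overline f$ irreducible modulo $2$ and $\overline{R_{2}(x)/2}=x$, so the gcd is $1$; for $p=5$ we have $\overline f(x)=(x-3)^{2}$ and $\overline{R_{5}(x)/5}$ is the constant $1$, so again the gcd is $1$. For each remaining prime below $10000$ one computes $\gcd(\overline{P_{p}},\overline f)$ in $\mathbb{F}_{p}[x]$ and checks that it equals $1$. Consequently $p$ never divides the index of $f(x^{p})$ for $p<10000$, so conditions~(1) and~(2) of Theorem~\ref{abc} both hold for any $k$ all of whose prime factors lie below $10000$, and therefore such $f(x^{k})$ is monogenic.

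The only genuine obstacle is that there appears to be no uniform argument valid for all primes at once: by the reformulation above, a counterexample would force a Wall--Sun--Sun-type pair of congruences modulo $p^{2}$ for Fibonacci and Lucas numbers, and although no such prime is currently known, none is provably excluded. The bound $10000$ therefore merely records the range over which the computation has been carried out, and extending the statement amounts to nothing more than extending that computation.
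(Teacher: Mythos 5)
Your argument is correct and follows essentially the same route as the paper: conditions (1) and (3) of Theorem \ref{sufnec} are immediate for $f(x)=x^2-x-1$, and condition (2) is settled by a finite computation verifying, for each prime $p<10000$, that $p$ does not divide the index of $f(x^p)$ (the paper does this by MAPLE; your gcd check of $\overline{P_p}$ with $\overline{f}$ modulo $p$ is the same verification). Your Fibonacci--Lucas reformulation, tying a failure to a Wall--Sun--Sun-type congruence, is a pleasant addition but corresponds to what the paper already records in the remark following the proposition.
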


\begin{proof}
    Clearly $f(x)$ is monogenic and $f(0)=-1$ is squarefree, so conditions (1) and (3) of Theorem \ref{sufnec} hold. For any prime $p$ less than 10000 we checked by MAPLE that $p$ does not divide the index of $f(x^p)$. So by Theorem \ref{sufnec}, if all prime factors of $k$ are less than 10000, then $f(x^k)$ is monogenic.
\end{proof}

\begin{remark}
    By the results of L. Jones \cite{ljsun}, $p$ divides the index of $f(x^p)$ if and only if $p$ is a Wall-Sun-Sun prime. Since there is no such prime under $2^{64}$, it is possible to change 10000 in the above theorem to $2^{64}$.
\end{remark}

\end{document}